\newtheorem{thm}{Theorem}[section]
\newtheorem{cor}[thm]{Corollary} 
\newtheorem{lem}[thm]{Lemma} 
\newtheorem{prop}[thm]{Proposition}
\theoremstyle{definition}
\theoremstyle{remark}
\numberwithin{equation}{section}
\newcommand{\R}{\mathbb{R}}
\newcommand{\Z}{\mathbb{Z}}
\newcommand{\Sph}{\mathbb{S}}
\renewcommand{\div}{\operatorname{div}}
\newcommand{\dd}{\mathop{}\!d} 
\newcommand{\mres}{%
  \,\raisebox{-.127ex}{\reflectbox{\rotatebox[origin=br]{-90}{$\lnot$}}}\,%
} 
\def\XXint#1#2#3{{\setbox0=\hbox{$#1{#2#3}{\int}$}
\vcenter{\hbox{$#2#3$}}\kern-.5\wd0}}
\newenvironment{PDE}
	{ \left \{
	\begin{array}{r@{ \ }l @{\quad \: \;} l}
	}
	{
	\end{array} \right . 
	}
\begin{document}

\title[Density estimates for sets of zero \(s\)-mean curvature]{Density estimates and the fractional Sobolev inequality for sets of zero \(s\)-mean curvature}

\author[J. Thompson]{Jack Thompson}
\address{The University of Western Australia (M019)35 Stirling Highway PERTH WA 6009 Australia}
\email{jack.thompson@research.uwa.edu.au}

\subjclass[2010]{Primary 35R11, 53A10} 

\date{\today}


\begin{abstract}
We prove that measurable sets \(E\subset \R^n\) with locally finite perimeter and zero \(s\)-mean curvature satisfy the surface density estimates: \begin{align*}
    \operatorname{Per} (E; B_R(x)) \geq CR^{n-1}
\end{align*} for all \(R>0\), \(x\in \partial^\ast E\). The constant \(C\) depends only on \(n\) and \(s\), and remains bounded as \(s\to 1^-\). As an application, we prove that the fractional Sobolev inequality holds on the boundary of sets with zero \(s\)-mean curvature. 
\end{abstract}

\maketitle

\section{Introduction and main results}

A fundamental result in the regularity of minimal surfaces is that a measurable set \(E\subset \R^n \) that is locally perimeter minimising satisfies the thick density estimates: \begin{align}
         \vert E \cap B_R(x) \vert \geq C R^n \text{ and }  \vert B_R(x)  \setminus E \vert \geq C R^n \label{gJJA6X52}
\end{align} for all \(R>0\), \(x\in \partial^\ast E\) where \(B_R(x)\) is a ball in \(\R^n\) centred at \(x\) with radius \(R\) and \(\vert F\vert\) denotes the Lebesgue measure of \(F\) in \(\R^n\), see for example \cite[Section 16.2]{maggi_sets_2012}. Here the constant \(C>0\) depends only on \(n\). Since any sufficiently regular set \(E\) satisfies~\eqref{gJJA6X52} for \(R\) sufficiently small and with the constant depending on the \(C^2\) regularity of \(E\) at \(x\), the non-triviality of~\eqref{gJJA6X52} is that it holds for all \(R>0\) and that the constant is universal. The assumption that \(E\) is locally minimising is essential to the proof of~\eqref{gJJA6X52} since the proof takes advantage of the fact that \(E\cap \partial B_R(x)\) is a competitor for \(\partial^\ast E \cap B_R(x)\).

If one only assumes that \(E\) has zero mean curvature, that is, it is locally stationary with respect to the perimeter functional then one can obtain the surface density estimate: \begin{align}
    \operatorname{Per}(E; B_R(x)) \geq C R^{n-1} \label{jR0GCsky}
\end{align} for all \(R>0\), \(x\in \partial^\ast E\), and with \(C\) depending only on \(n\). Indeed, the monotonicity formula for sets of zero mean curvature immediately implies that \begin{align*}
      \frac{\operatorname{Per}(E; B_R(x))}{R^{n-1}} \geq \lim_{r\to 0^+ }\frac{\operatorname{Per}(E; B_r(x))}{r^{n-1}} =  n\omega_n 
\end{align*} where \(\omega_n = \vert B_1 \vert\), see \cite[Theorem 17.16]{maggi_sets_2012}. Observe that the thick density estimates~\eqref{gJJA6X52} directly imply the surface density estimates~\eqref{jR0GCsky} via the restricted isoperimetric inequality \begin{align*}
    \operatorname{Per}(E;B_R(x) ) \geq C \big ( \min \{ \vert E \cap B_R(x) \vert,\vert B_R(x)  \setminus E \} \big )^{\frac{n-1}n }, 
\end{align*} but sets with zero mean curvature do not necessarily satisfy~\eqref{gJJA6X52}, for example, if \(E = \R^{n-1} \times (0,1)\) then \(\partial E\) has zero mean curvature, but \(\vert E \cap B_R \vert \leq \vert B^{n-1}_R \times (0,1) \vert \leq C R^{n-1} \). An example of a connected hypersurface with zero mean curvature that does not satisfy~\eqref{gJJA6X52} is given by the catenoid.

In the celebrated paper \cite{caffarelli_nonlocal_2010}, a nonlocal (or fractional) concept of perimeter, known as the \(s\)-perimeter, was introduced that was a natural generalisation of the classical perimeter. As well as motivating and defining the \(s\)-perimeter, Caffarelli, Roquejoffre, and Savin in \cite{caffarelli_nonlocal_2010} establish several foundational results for the regularity theory of minimisers\footnote{\label{note1}For a precise definition, see Section 2} of the \(s\)-perimeter including the existence of minimisers, improvement of flatness, a monotonicity formula, blow-up limits to \(s\)-minimal cones, and Federer's dimension reduction. In particular, they also established minimisers of the \(s\)-perimeter satisfy the thick density estimates~\eqref{gJJA6X52}. Since their paper, the regularity theory of critical points of the \(s\)-perimeter (stationary, stable, and minimisers) has become an incredibly active area of research. Some important results include: interior higher regularity \cite{barrios_bootstrap_2014}; boundary regularity and stickiness \cite{dipierro_boundary_2017,dipierro_nonlocal_2020}; classification of cones for \(n=2\) \cite{savin_regularity_2013}; classification of cones for \(s\) close to \(1\) \cite{caffarelli_regularity_2013}; regularity of \(s\)-minimal graphs \cite{cabre_gradient_2019}; explicit examples of sets of zero nonlocal mean curvature \cite{cozzi_growth_2020,davila_nonlocal_2018,cinti_solutions_2016,cabre_stable_2021}; examples of constant (non-zero) nonlocal mean curvature and the nonlocal Alexandrov soap bubble theorem \cite{ciraolo_rigidity_2018,cabre_curves_2018,cabre_near-sphere_2018,cabre_delaunay_2018,davila_nonlocal_2016,fall_constant_2018}; and Yau's conjecture \cite{caselli2024yaus}. For a nice survey of the current literature, see \cite{dipierro_nonlocal_2018}.

In analogy to the classical case, if \(E\) is \(s\)-stationary, that is stationary\textsuperscript{\ref{note1}} with respect to the \(s\)-perimeter, and \(\partial E\) is sufficiently smooth then \(E\) satisfies \begin{align*}
    \mathrm H_{s,E}(x) := \lim_{\varepsilon \to 0^+} \int_{\R^n \setminus B_\varepsilon(x)} \frac{\chi_{\R^n \setminus E}(y) - \chi_E (y) }{\vert x - y \vert^{n+s }} \dd y =0
\end{align*} for all \(x\in \partial E\), see \cite{figalli_isoperimetry_2015}. It is conventional to call \( \mathrm H_{s,E}\) the \(s\)-mean curvature of \(E\). Much of the literature focuses on the regularity of sets that minimise the \(s\)-perimeter and relatively little is known about the regularity of sets with zero of \(s\)-mean curvature. Indeed, although~\eqref{gJJA6X52} was proven in \cite{caffarelli_nonlocal_2010} for minimisers of the \(s\)-perimeter, a proof that sets of zero \(s\)-mean curvature satisfy the surface density estimate~\eqref{jR0GCsky} is not currently available in the literature. The main result of this note establishes this estimate.

\begin{thm} \thlabel{jG1dQXin}
Let \(s\in (0,1)\) and \(E \subset \R^n \) be a measurable set with locally finite perimeter satisfying \(\mathrm H_{s,E}=0 \) on \(\partial^\ast E\). Then \begin{align*}
    \operatorname{Per}(E;B_R) \geq C R^{n-1}  \text{ for all }R>0.
\end{align*} The constant \(C>0\) depends only on \(n\) and a lower bound for \(s\).
\end{thm}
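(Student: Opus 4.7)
The plan is to exploit the pointwise Euler--Lagrange equation \(\mathrm H_{s,E}(x_0)=0\) at \(x_0 \in \partial^\ast E\), combined with an integration-by-parts identity, to extract a lower bound on the classical perimeter. After translating so that \(x_0=0\), splitting the principal-value integral at the sphere of radius \(R\) and bounding the outer piece by the universal decay of the kernel gives, for every \(R>0\),
\begin{align*}
    \bigg| \PV\!\int_{B_R} \frac{\chi_{E^c}(y) - \chi_E(y)}{|y|^{n+s}}\,\dd y \bigg| \;=\; \bigg|\int_{\R^n \setminus B_R} \frac{\chi_{E^c}-\chi_E}{|y|^{n+s}}\,\dd y\bigg| \;\le\; \frac{n\omega_n}{sR^s},
\end{align*}
which is a universal scale-\(R\) control on the principal value and the sole input supplied by the zero-\(s\)-mean-curvature hypothesis.

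The core step is to rewrite this principal value as a geometric quantity on \(\partial^\ast E\). Using the identity \(\frac{1}{|y|^{n+s}} = -\frac{1}{s}\,\div\bigl(\frac{y}{|y|^{n+s}}\bigr)\) and applying the divergence theorem to each of \(E \cap (B_R \setminus B_\varepsilon)\) and \(E^c \cap (B_R \setminus B_\varepsilon)\), then passing to the limit \(\varepsilon\to 0^+\), the left-hand side rewrites, schematically, as
\begin{align*}
    \frac{2}{s}\int_{\partial^\ast E \cap B_R} \frac{y \cdot \nu_E(y)}{|y|^{n+s}}\,\dd \HH^{n-1}(y) \;+\; \mathcal{B}_R \;+\; \mathcal{O}_0,
\end{align*}
where \(\mathcal B_R\) is an outer-boundary term on \(\partial B_R\) of magnitude \(\le \frac{n\omega_n}{sR^s}\) involving \(|E \cap \partial B_R|_{n-1}\) and \(|E^c \cap \partial B_R|_{n-1}\), and \(\mathcal O_0\) is a residual contribution from the inner sphere whose limit exists by convergence of the P.V.

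To convert this identity into a perimeter estimate I would argue by contradiction: if \(\operatorname{Per}(E; B_R) \ll R^{n-1}\), then by the relative isoperimetric inequality one of \(|E\cap B_R|\) or \(|E^c\cap B_R|\) is \(\ll R^n\); combined with the density-\(\tfrac{1}{2}\) condition at \(0\), this volume imbalance creates a transition radius \(r_\ast \ll R\) at which the principal-value integral acquires a dominant contribution of order \(r_\ast^{-s} \gg R^{-s}\), contradicting the uniform bound from the first step. A quantitative version of this---with the coarea formula relating the spherical terms in \(\mathcal B_R\) to perimeter in averaged form---should then yield \(\operatorname{Per}(E; B_R) \ge C(n,s)\, R^{n-1}\), with \(s\)-dependence entering only through the bounded factor \(1/s\) and hence uniform as \(s\to 1^-\).

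The chief technical obstacle is the rigorous handling of the residual \(\mathcal O_0\) in the integration by parts: at a generic reduced-boundary point the spherical densities \(|E \cap \partial B_\varepsilon|_{n-1}\) and \(|E^c \cap \partial B_\varepsilon|_{n-1}\) need not have individual limits, because the density-\(\tfrac{1}{2}\) condition is a volume statement rather than a surface one. I would address this by smoothly truncating the kernel \(|y|^{-(n+s)}\) near the origin and passing to a joint limit, using the a priori convergence of the P.V.\ to commute the two limit processes, or alternatively by a blow-up analysis at \(0\) that isolates only the controllable components.
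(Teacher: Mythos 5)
The decisive gap is structural: your argument uses the hypothesis only at the single point \(x_0=0\) --- as you write, the tail bound \(\big\vert \PV\int_{B_R}\frac{\chi_{E^c}-\chi_E}{\vert y\vert^{n+s}}\dd y\big\vert\le \frac{n\omega_n}{sR^s}\) is ``the sole input supplied by the zero-\(s\)-mean-curvature hypothesis''. No argument with only this input can prove the theorem, because vanishing of \(\mathrm H_{s,E}\) at one reduced-boundary point is compatible with \(\operatorname{Per}(E;B_R)=O(1)\) as \(R\to\infty\). Concretely, take \(E=\big(B_\delta\cap\{x_n<0\}\big)\cup\big(\R^n\setminus B_M\big)\) with \(M=2^{1/s}\delta\): the half-ball part contributes zero to the principal value at \(0\) by symmetry, and the annulus \(B_M\setminus B_\delta\) (entirely \(E^c\)) exactly balances the exterior of \(B_M\) (entirely \(E\)), so \(\mathrm H_{s,E}(0)=0\) with \(0\in\partial^\ast E\), yet \(\operatorname{Per}(E;B_R)\le C(n)M^{n-1}\) for every \(R\), so \(\operatorname{Per}(E;B_R)\ll R^{n-1}\) for large \(R\). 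Hence the estimate genuinely requires the condition on all of \(\partial^\ast E\), used in an integrated/variational way; note also that in the theorem the hypothesis is the stationarity (weak) formulation, and for a set of merely locally finite perimeter the pointwise principal value need not even exist at a prescribed reduced-boundary point. This is precisely why the paper routes the proof through the Caffarelli--Silvestre extension: monotonicity of \(\Phi_{E,x}\) for stationary sets, the universal lower bound on \(\lim_{r\to 0^+}\Phi_{E,x}(r)\) at \(x\in\partial^\ast E\), the trace--energy estimate \(\Phi_{E,x}(R)\le Cs^{-1}R^{s-n}\operatorname{Per}_s(E;B_{2R})\), and an interpolation inequality controlling \(\operatorname{Per}_s(E;B_{2R})\) by the classical perimeter plus an \(\varepsilon R^{n-s}\) term that is then absorbed.

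Even within its own scope, the contradiction step does not close. The density-\(\tfrac12\) property at \(0\) is a limit statement with no rate, so it gives no quantitative information at the unknown transition scale \(r_\ast\), and any constant extracted from it would depend on \(E\) and the point rather than only on \(n\) and \(s\). More importantly, \(\mathrm H_{s,E}(0)=0\) only says that the principal value over \(B_{r_\ast}\) equals minus the integral over \(\R^n\setminus B_{r_\ast}\); nothing forbids that inner principal value from having size \(r_\ast^{-s}\) with the compensating sign, since at scale \(r_\ast\) the set need not resemble a half-space (the volume density only tends to \(\tfrac12\) asymptotically), so the ``dominant contribution of order \(r_\ast^{-s}\gg R^{-s}\)'' contradicts nothing. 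Likewise, the signed integral \(\int_{\partial^\ast E\cap B_R}\frac{y\cdot\nu_E(y)}{\vert y\vert^{n+s}}\dd\mathcal H^{n-1}_y\) produced by your integration by parts admits no one-sided comparison with \(\operatorname{Per}(E;B_R)\), so even a rigorous treatment of the residual \(\mathcal O_0\) would not convert the identity into the desired perimeter lower bound.
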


The precise definition \(\mathrm H_{s,E}=0 \) on \(\partial^\ast E\) for a set of locally finite perimeter is given in Section 2. Several articles in the literature have explored related density estimates to~\thref{jG1dQXin} including \cite{cinti_quantitative_2019} and \cite{caselli2024yaus,caselli2024fractional}. In \cite{cinti_quantitative_2019}, the authors establish upper surface density estimates of the form \begin{align}
    \operatorname{Per} (E;B_R(x)) \leq C R^{n-1} \label{0YzpLLRv}
\end{align} for all \(x\in \partial^\ast E\), \(R>0\) for stationary, stable sets \(E\) with respect to the \(s\)-perimeter and other nonlocal perimeters with more general kernels. See also \cite{cabre_stable_2021} where \(\mathrm{BV}\)-estimates, analogous to~\eqref{0YzpLLRv}, are established for stable solutions to the fractional Allen-Cahn equation. It is interesting to note that in the classical case of stable surfaces with zero mean curvature,~\eqref{0YzpLLRv} is known only for \(n=2\) and is famously open for \(n\geq 3\). In \cite{caselli2024yaus,caselli2024fractional}, the main focus is on establishing a nonlocal Yau's conjecture, that is, in any closed Riemannian manifold there exist infinitely many sets of zero \(s\)-mean curvature. In particular, they prove the estimate~\eqref{0YzpLLRv} for sets with zero \(s\)-mean curvature in closed Riemannian manifolds. We should mention that, though most of the techniques present in this paper are also present in \cite{cinti_quantitative_2019,caselli2024yaus,caselli2024fractional}, particularly \cite{caselli2024yaus,caselli2024fractional}, the result \thref{jG1dQXin} is never explicitly stated nor is its connection to the fractional Sobolev inequality on hypersurfaces, see below. Note that~\eqref{0YzpLLRv} does not hold for sets with zero \(s\)-mean curvature that are not stable. Indeed, a counterexample is provided by an infinite, periodically arranged sequence of slabs\textemdash more specifically, the set \(\R^{n-1}\times \bigcup_{k\in \Z}(2k\varepsilon,(2k+1)\varepsilon)\) for any \(\varepsilon>0\). This set has zero \(s\)-mean curvature for all \(\varepsilon>0\), but is not stable for \(\varepsilon\) sufficiently small, see \cite[Remark 2.3]{cabre_stable_2021}.

It is natural to ask whether inequality~\eqref{gJJA6X52} holds for sets with zero \(s\)-mean curvature. To the best of the author’s knowledge, this remains unknown in full generality. The two counterexamples mentioned in the introduction for the classical case\textemdash the slab and the catenoid\textemdash do not extend to the nonlocal setting: the slab has constant positive \(s\)-mean curvature, and the nonlocal catenoid grows linearly at infinity \cite{davila_nonlocal_2018}, so it satisfies~\eqref{gJJA6X52} for some suitable constant \(C>0\). Similarly, the periodic sequence of slabs that served as a counterexample to~\eqref{0YzpLLRv} for non-stable sets satisfies~\eqref{gJJA6X52}, and therefore is also not a counterexample.

We conclude this remark by noting that~\eqref{gJJA6X52} does hold under a uniform perimeter bound\textemdash such as in the case of \(s\)-stationary stable sets, as shown in \cite{cinti_quantitative_2019}\textemdash and this will be further detailed in an upcoming paper \cite{cozzi_halfspace_inprep}.

A nice application of \thref{jG1dQXin} is that the fractional Sobolev inequality holds on hypersurfaces in \(\R^n\) with zero \(s\)-mean curvature. The classical Sobolev inequality in \(\R^n\) states that if \(1\leq p < n \) and \(p^\ast := \frac{np}{n-p}\) then \begin{align}
    \| u \|_{L^{p^\ast}(\R^n)} \leq C \| \nabla u \|_{L^p(\R^n)} \label{3U5YSnYf}
\end{align} for all \(u \in C^\infty_0(\R^n)\) with \(C>0\) depending only on \(n\) and \(p\). It is also interesting to ask if the Sobolev inequality can hold on hypersurfaces \(M^n \hookrightarrow \R^{n+1}\). It is easy to see that~\eqref{3U5YSnYf} (with \(\R^n\) replaced with \(M\) and \(\nabla = \nabla_M\) the gradient with respect to the induced metric from \(\R^{n+1}\)) cannot hold verbatim since if \(M\) is compact then \(u=1\in C^\infty_0(M)\); however, a Sobolev-type inequality does hold if one adds an extra \(L^p\) error term involving the mean curvature \(\mathrm H_M\): \begin{align}
    \| u \|_{L^{p^\ast}(M)} \leq C \big ( \| \nabla_M u \|_{L^p(M)} + \|\mathrm H_M u \|_{L^p(M) }\big ) \label{RMdxxBBd}
\end{align} for all \(u \in C^\infty_0(M)\). The inequality~\eqref{RMdxxBBd} is sometimes known as the Michael-Simon and Allard inequality after \cite{allard_first_1972} and \cite{michael_sobolev_1973}. Also, see \cite{brendle_isoperimetric_2021} where the optimal constant in~\eqref{RMdxxBBd} is obtained and \cite{cabre_universal_2022} for a simple proof of~\eqref{RMdxxBBd}. 

Naturally, one would like to know if~\eqref{RMdxxBBd} extends to the nonlocal case. If \(M=\R^n\), \(1\leq p < n/s\), and \(p^\ast = \frac{np}{n-sp}\) then it is well known that\begin{align*}
    \| u \|_{L^{p^\ast}(\R^n)} \leq C [u]_{W^{s,p}(\R^n )}
\end{align*} where \begin{align}
    [v]_{W^{s,p}(\R^n)} &= \bigg ( \int_{\R^n } \int_{\R^n}\frac{\vert v(x) - v(y) \vert^p }{\vert x - y \vert^{n+sp}} \dd y \dd x \bigg )^{\frac 1 p }, \label{JS7OQLcc}
\end{align}see \cite{di_nezza_hitchhikers_2012} and references therein. If \(E \subset \R^n\) is an open set with locally finite perimeter (or sufficiently smooth boundary) and \begin{align*}
      [v]_{W^{s,p}(\partial^\ast E)} &= \bigg ( \int_{\partial^\ast E } \int_{\partial^\ast E}\frac{\vert v(x) - v(y) \vert^p }{\vert x - y \vert^{n+sp}} \dd \mathcal H^{n-1}_y \dd \mathcal H^{n-1}_x \bigg )^{\frac 1 p }
\end{align*} then it is currently an open problem whether the inequality \begin{align}
     \| u \|_{L^{p^\ast}(\partial^\ast E)} \leq C \big (  [u]_{W^{s,p}(\partial^\ast E)} + \| \mathrm H_{s,E}u\|_{L^p(\partial^\ast E)} \big ) \label{yp5YLcrO}
\end{align} holds for all \(u\in C^\infty_0(\partial^\ast E)\). Recently, it was shown in \cite{cabre_fractional_2023} that~\eqref{yp5YLcrO} does hold provided that \(E\) is convex. Their argument relies on first establishing a pointwise lower bound on the \(s\)-mean curvature in terms of the perimeter. Unfortunately, as they point out in their paper, this pointwise inequality clearly cannot hold in the non-convex case, so the argument doesn't easily generalise to the case of arbitrary \(E\). Furthermore, in \cite[Proposition 5.2]{cabre_gradient_2019}, it was proven via an extension of an (unpublished) argument due to Brezis (see \cite[Proposition 15.5]{ponce_elliptic_2016} or \cite[Theorem 2.2.1]{bucur_nonlocal_2016} for this argument), that the boundary density estimate~\eqref{jR0GCsky} implies~\eqref{yp5YLcrO} without the term involving the \(s\)-mean curvature. Hence, via \cite[Proposition 5.2]{cabre_gradient_2019}, \thref{jG1dQXin} immediately implies that~\eqref{yp5YLcrO} holds for sets of zero \(s\)-mean curvature:

\begin{cor}
Let \(s,\alpha\in (0,1)\), \(1\leq p <n/ \alpha\), \(p^\ast = \frac{np}{n-\alpha p}\), and \(E \subset \R^{n+1} \) be a measurable set with locally finite perimeter satisfying \(\mathrm H_{s,E}=0 \) on \(\partial^\ast E\). Then the Sobolev inequality \begin{align*}
    \| u\|_{L^{p^\ast}(\partial^\ast E) } \leq C [u]_{W^{\alpha,p}(\partial^\ast E)}
\end{align*} holds for all \(u\in C^\infty_0(\partial^\ast E)\). The constant \(C>0\) depends only on \(n\), \(\alpha\), and \(s\). 
\end{cor}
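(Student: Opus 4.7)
The plan is to deduce the corollary as an immediate consequence of Theorem~\thref{jG1dQXin} combined with the nonlocal Brezis-type argument encoded in \cite[Proposition 5.2]{cabre_gradient_2019}. That proposition takes as its sole geometric hypothesis a lower surface-density bound on \(\partial^\ast E\), namely that for some \(c>0\) one has \(\mathcal{H}^{N-1}(\partial^\ast E \cap B_R(x)) \geq c R^{N-1}\) for every \(x \in \partial^\ast E\) and every \(R>0\) (where \(N\) is the dimension of the ambient space), and it concludes the clean Sobolev embedding \(\|u\|_{L^{p^\ast}(\partial^\ast E)} \leq C[u]_{W^{s,p}(\partial^\ast E)}\), with no curvature correction term. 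Since, by De Giorgi's structure theorem, \(\operatorname{Per}(E;B_R(x)) = \mathcal{H}^{N-1}(\partial^\ast E \cap B_R(x))\) whenever \(E\) has locally finite perimeter, Theorem~\thref{jG1dQXin} applied in \(\R^{n+1}\) (so \(N=n+1\)) supplies precisely this lower surface-density bound, with a constant depending only on \(n\) and \(s\). Combining these two inputs yields the corollary directly.

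To make the mechanism of the Brezis-type step transparent, I would recall it briefly. For \(u \in C^\infty_0(\partial^\ast E)\) and a level \(\lambda>0\), one considers, at each point \(x\) with \(|u(x)|>2\lambda\), a ball \(B_r(x)\) at a scale \(r=r(\lambda,x)\) chosen so that
\[
\mathcal{H}^{N-1}\big(\{|u|>\lambda\}\cap B_r(x)\cap \partial^\ast E\big) \leq \tfrac{1}{2}\operatorname{Per}(E;B_r(x)).
\]
This is possible precisely because the lower density bound \(\operatorname{Per}(E;B_r(x)) \geq c r^{N-1}\) grows at the maximal surface-measure scale, which guarantees a sizeable set of points \(y \in B_r(x)\cap \partial^\ast E\) with \(|u(y)|\leq \lambda\) and hence with \(|u(x)-u(y)|\geq \lambda\). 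A Chebyshev bound on the Gagliardo integrand \(|u(x)-u(y)|^p/|x-y|^{N+sp-1}\) (with measure \(\dd\mathcal{H}^{N-1}_y\)) then produces a pointwise lower bound on a fractional Riesz potential of \([u]_{W^{s,p}}^p\) in terms of powers of \(\lambda\), which one converts to the endpoint exponent \(p^\ast = \tfrac{np}{n-sp}\) via a layer-cake/Marcinkiewicz interpolation argument over all \(\lambda\).

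There is essentially no remaining obstacle, so I do not anticipate a \textquotedblleft hard step\textquotedblright: Theorem~\thref{jG1dQXin} is the sole new ingredient and the rest is a verbatim invocation of \cite[Proposition 5.2]{cabre_gradient_2019}. The only bookkeeping concerns dimensions: since \(E\subset \R^{n+1}\), the boundary \(\partial^\ast E\) is \(n\)-dimensional, so Theorem~\thref{jG1dQXin} delivers \(\operatorname{Per}(E;B_R(x)) \geq c R^{n}\), and the critical Sobolev exponent for an \(n\)-dimensional surface is the stated \(p^\ast = \tfrac{np}{n-sp}\) in the range \(1\leq p < n/s\). The dependence of \(C\) on \(n\) and \(s\) is inherited from that of the constant in Theorem~\thref{jG1dQXin} together with the dimensional constants appearing in Proposition~5.2 of \cite{cabre_gradient_2019}.
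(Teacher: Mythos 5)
Your proposal is correct and follows exactly the paper's route: Theorem~\thref{jG1dQXin}, applied in \(\R^{n+1}\), furnishes the lower surface-density bound \(\operatorname{Per}(E;B_R(x))\geq cR^{n}\) on \(\partial^\ast E\), and the corollary then follows verbatim from the Brezis-type result \cite[Proposition 5.2]{cabre_gradient_2019}, which is precisely how the paper deduces it. Your dimensional bookkeeping (boundary of dimension \(n\), exponent \(p^\ast=\frac{np}{n-sp}\) for \(1\leq p<n/s\)) and the sketch of the Brezis mechanism are consistent with the cited proposition, so nothing further is needed.
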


\subsection{Organisation of paper}
The paper is organised as follows. In Section 2, we fix notation and give standard definitions of the classical theory of BV functions and minimal surfaces, and definitions from the theory of nonlocal minimal surfaces. In Section 3, we prove an interpolation inequality between restricted \(s\)-perimeter and the classical perimeter. In Section 4, we survey some results related to the Caffarelli-Silvestre extension in the context of nonlocal minimal surfaces that will be essential to the proof of \thref{jG1dQXin}. Finally, in Section 5, we give the proof of \thref{jG1dQXin}.

\section*{Acknowledgements}
Jack Thompson is supported by an Australian Government Research Training Program Scholarship. JT would like to thank the anonymous referee for their useful and interesting comments, and Serena Dipierro, Giovanni Giacomin, and Enrico Valdinoci for their valuable conversations and comments on the first draft of this note. He would also like to thank Jo{\~a}o Gon\c{c}alves da Silva for his suggestions and Tommaso Di Ubaldo for his deep insights.

\section{Definitions and notation}
First, we recall some notation and definitions of functions of bounded variation and the classical theory of minimal surfaces. Let \(\Omega \subset \R^n\) be an open set. Given a function \(u\in L^1(\Omega)\) the \emph{total variation} of \(u\) in \(\Omega\) is given by \begin{align*}
    \vert \nabla u \vert(\Omega) = \sup \bigg \{ \int_\Omega u \div \phi \dd x \text{ s.t. } \phi \in C^1_0(\Omega ; \R^n ), \| \phi \|_{L^\infty(\Omega ; \R^n)} \leq 1 \bigg \} . 
\end{align*} The space \(\operatorname{BV}(\Omega) \) is defined to be the set of \(u\in L^1(\Omega)\) such that \(\vert \nabla u \vert (\Omega) <+\infty \) and the space \(\operatorname{BV}_{\mathrm{loc}}(\Omega) \) is the set of functions in \(\operatorname{BV}(\Omega')\)  for all \(\Omega'\subset \subset \Omega\). Moreover, we say a measurable set \(E\subset \R^n\) has \emph{finite perimeter} in \(\Omega\) if \(\chi_E \in \operatorname{BV}(\Omega) \) where \(\chi_A\) denotes the characteristic function of \(A\). In this case, we define the \emph{perimeter of} \(E\) in \(\Omega\) by \begin{align*}
    \operatorname{Per}(E;\Omega) =  \vert \nabla \chi_E \vert(\Omega).
\end{align*} We also say \(E\) has \emph{locally finite perimeter} in \(\Omega\) if \(\chi_E\in \operatorname{BV}_{\mathrm{loc}}(\Omega)\) and simply \(E\) has locally finite perimeter if \(\chi_E\in \operatorname{BV}_{\mathrm{loc}}(\R^n)\). Finally, given \(E\) with finite perimeter in \(\Omega\), the distributional gradient of \(\chi_E\), i.e. \(\nabla \chi_E\), is a vector-valued Radon measure on \(\R^n\). Then we can define the \emph{reduced boundary} of \(E\), denoted \(\partial^\ast E \), as the set \begin{align*}
   \bigg \{ x\in \partial E \text{ s.t. } \vert \nabla \chi_E  \vert(B_r(x))>0 \text{ for all }r>0, \text{ and } \lim_{r\to 0^+} \frac{\nabla \chi_E (B_r(x)) }{\vert \nabla \chi_E  \vert(B_r(x)) } \text{ exists and is in } \Sph^{n-1}  \bigg \} .
\end{align*}

Now, let us turn to the nonlocal case. Let \(s\in (0,1)\) and \begin{align*}
    \mathcal I_s(A,B) = \int_A \int_B \frac{\dd y \dd x }{\vert x - y \vert^{n+s}} . 
\end{align*} The \(s\)-perimeter of a measurable set \(E\subset \R^n\) in \(\Omega\) is defined to be \begin{align*}
     \operatorname{Per}_s(E ; \Omega  ) = \mathcal I_s (E\cap \Omega , E^c \cap \Omega) +\mathcal I_s (E\cap \Omega , E^c \setminus \Omega)+\mathcal I_s (E\setminus \Omega, E^c\cap \Omega). 
\end{align*} We will also sometimes refer to \(\operatorname{Per}_s(E ; \Omega  )\) as the \emph{restricted \(s\)-perimeter}. Note, we also have that \begin{align}
    \operatorname{Per}_s(E ; \Omega  ) = \frac12\iint_{\mathcal Q(\Omega) } \frac{\vert \chi_E(x)-\chi_E(y) \vert}{\vert x - y \vert^{n+s} } \dd y \dd x \label{LDtSA29N}
\end{align} where we define \begin{align*}
    \mathcal Q(\Omega) = (\Omega^c \times \Omega^c)^c = (\Omega \times \Omega) \cup (\Omega \times \Omega^c) \cup (\Omega^c\times \Omega) \subset \R^{2n}.
\end{align*}

A measurable set \(E\) is a \emph{minimiser} of the \(s\)-perimeter in \(\Omega\) or \(s\)-minimal in \(\Omega\) if \begin{align*}
    \operatorname{Per}_s(E; \Omega) \leq \operatorname{Per}_s(F;\Omega)
\end{align*} for all measurable set \(F\) such that \(E\setminus\Omega = F\setminus \Omega\). Moreover, a measurable set \(E\) is stationary with respect to the \(s\)-perimeter in \(\Omega\) or \(s\)-stationary if \( \frac{\dd }{\dd t}\bigg \vert_{t=0} \operatorname{Per}_s(E_{t,T }; \Omega)\) exists and \begin{align}
    \frac{\dd }{\dd t}\bigg \vert_{t=0} \operatorname{Per}_s(E_{t,T }; \Omega)=0 \label{3MedX4AV}
\end{align} for all \(T\in C^\infty_0(\Omega; \R^n )\) where \begin{align*}
    E_{t,T} := \psi_T (E,t)
\end{align*} and \(\psi_T : \R^n \times (-\varepsilon , \varepsilon) \to \R^n\) satisfies \begin{align*}
    \begin{PDE}
\partial_t \psi_T &= T\circ \psi_T, &\text{in } \R^n \times (-\varepsilon , \varepsilon) \\
\psi_T &= \operatorname{Id}_{\R^n}, &\text{on } \R^n \times \{t=0\}.
    \end{PDE}
\end{align*} One can also define \(s\)-stationary stable sets, but we do not require this definition for the current paper,  see, for example, \cite[Section 6]{figalli_isoperimetry_2015}.

\section{An interpolation inequality}
In this section, we prove an interpolation inequality which will allow us to estimate the restricted \(s\)-perimeter with the restricted perimeter plus an arbitrarily small error. The precise statement is as follows.

\begin{thm} \thlabel{7qaGfSn3}
Let \(R>0\) and \(u\in \operatorname{BV}_{\mathrm{loc}}(\R^n) \cap L^\infty(\R^n)\). Then, for all \(\varepsilon\in (0,3^{- s } )\), \begin{align*}
    \iint_{\mathcal Q (B_R)} \frac{\vert u(x) - u(y)\vert }{\vert x - y \vert^{n+s}} \dd y \dd x \leq \frac C s  \bigg (\frac{1}{1-s} \| u\|_{L^1(B_{2\varepsilon^{-1/s}R})}^s\vert \nabla u\vert^{1-s}(B_{2\varepsilon^{- 1/s  }R})  + \varepsilon  R^{n-s} \| u\|_{L^\infty(\R^n) } \bigg ).
\end{align*} The constant \(C>0\) depends only on \(n\).
\end{thm}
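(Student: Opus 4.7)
The plan is to fix a cut-off length \(\delta>0\) and split the integral over \(\mathcal{Q}(B_R)\) according to whether \(|x-y|<\delta\) or \(|x-y|\geq \delta\). Call these pieces \(I_{\mathrm{near}}\) and \(I_{\mathrm{far}}\). The near piece should be controlled by the BV seminorm on a slightly enlarged ball, producing the \(R^{1-s}/(1-s)\) factor, while the far piece should be controlled crudely by \(\|u\|_{L^\infty}\), producing the \(R^{n-s}/s\) factor. Choosing \(\delta=\varepsilon^{-1/s}R\) at the very end balances the two into precisely the stated form, with the enlarged ball having radius \(R+\delta=(1+\varepsilon^{-1/s})R\).

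To estimate \(I_{\mathrm{near}}\), I would write \(\mathcal{Q}(B_R)=(B_R\times \R^n)\sqcup(B_R^c\times B_R)\) as a disjoint union. On each piece, the substitution \(h=y-x\) (respectively \(h=x-y\)) together with Fubini --- crucially, keeping the variable constrained to \(B_R\) as the outer integration variable --- reduces things to
\[ \int_{B_R}\int_{|h|<\delta}\frac{|u(x+h)-u(x)|}{|h|^{n+s}}\dd h\dd x. \]
The standard BV translation estimate \(\int_{B_R}|u(x+h)-u(x)|\dd x\leq |h|\cdot|\nabla u|(B_{R+|h|})\), combined with polar integration yielding \(\int_{|h|<\delta}|h|^{1-n-s}\dd h=\omega_{n-1}\delta^{1-s}/(1-s)\), then gives \(I_{\mathrm{near}}\leq C(1-s)^{-1}\delta^{1-s}|\nabla u|(B_{R+\delta})\). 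For \(I_{\mathrm{far}}\), the bound \(|u(x)-u(y)|\leq 2\|u\|_{L^\infty}\) and the same decomposition reduce matters to the tail integral \(|B_R|\int_{|h|\geq\delta}|h|^{-n-s}\dd h=C R^n\delta^{-s}/s\), so \(I_{\mathrm{far}}\leq C s^{-1}R^n\delta^{-s}\|u\|_{L^\infty}\). The condition \(\varepsilon<3^{-1/s}\) should come in only as a mild scale-separation assumption ensuring \(\delta\gtrsim R\), so that the \(L^\infty\)-based bound on the far part is not wasteful.

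The main thing to be careful about --- really the only subtlety --- is the cross piece \(B_R^c\times B_R\) in the near estimate. If one naively sets \(h=y-x\) with \(x\in B_R^c\), then \(x\) ranges over the annulus \(B_{R+\delta}\setminus B_R\) and the translation estimate picks up \(|\nabla u|(B_{R+2\delta})\) in place of \(|\nabla u|(B_{R+\delta})\), doubling the enlargement of the ball and spoiling the tight radius \(R(1+\varepsilon^{-1/s})\) in the statement. The fix is exactly to swap the order of integration so that the base variable is always the one constrained to \(B_R\); after that swap, the same BV translation estimate produces the correct radius. Everything else is routine polar integration and a bookkeeping of the numerical constants.
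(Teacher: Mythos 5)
Your proposal is correct, and while it follows the same high-level strategy as the paper --- split the integral over \(\mathcal Q(B_R)\) at the scale \(\delta=\varepsilon^{-1/s}R\), control the near part by the total variation on \(B_{(1+\varepsilon^{-1/s})R}\) and the far part by \(\|u\|_{L^\infty}\) times a tail integral --- the implementation genuinely differs. The paper works first with \(u\in C^\infty_0\) at scale \(R=1\): for the near part it writes \(|u(x+y)-u(x)|\le |y|\int_0^1|\nabla u(x+ty)|\,\dd t\), changes variables \(x\to x-ty\), and invokes its Lemma 3.3 to bound \(\chi_{\mathcal Q(B_1)}(x-ty,x+(1-t)y)\chi_{B_\rho}(y)\le \chi_{B_{\rho+1}}(x)\chi_{B_\rho}(y)\); it then approximates general BV functions by smooth ones and rescales. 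You instead decompose \(\mathcal Q(B_R)=(B_R\times\R^n)\sqcup(B_R^c\times B_R)\), use Fubini to keep the \(B_R\)-constrained variable as the base point, and apply the standard BV translation estimate \(\int_{B_R}|u(x+h)-u(x)|\,\dd x\le |h|\,|\nabla u|(B_{R+|h|})\); your remark about swapping the order of integration in the cross term is exactly the point that preserves the tight radius \(R+\delta\) and plays the role of the paper's Lemma 3.3 (note the smooth approximation the paper performs is implicitly hidden in the proof of that translation estimate, which is standard). Your far-part bound is also simpler and sharper in one respect: bounding each piece by \(|B_R|\int_{|h|\ge\delta}|h|^{-n-s}\,\dd h\) works for every \(\delta>0\), so you never need the restriction \(\varepsilon<3^{-1/s}\), whereas the paper uses \(\rho>3\) to discard the \(B_1\times B_1\) portion and to compare \((\rho-1)^{-s}\) with \(\rho^{-s}\); your aside that this hypothesis ensures the far bound "is not wasteful" mischaracterises its role in the paper, but since your argument proves the (stronger) statement for all \(\varepsilon>0\), this is harmless. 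Both routes yield the stated constant behaviour in \(n\), \(s\).
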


For similar interpolation inequalities to~\thref{7qaGfSn3} and~\thref{60sDucQ0} below, see \cite[Equation 3.]{cinti_quantitative_2019} and \cite[Lemma 3.15]{caselli2024fractional}. A small, but important detail in \thref{7qaGfSn3} is the inclusion of the \(\varepsilon\) term which will be essential to the proof of \thref{jG1dQXin} since it will allow us to absorb an error term coming from \thref{7qaGfSn3} into the left-hand side of a chain of inequalities.

Taking \(u = \chi_E\) and \(u= \chi_{E^c}\) in \thref{7qaGfSn3}, we immediately obtain the following corollary. 

\begin{cor} \thlabel{60sDucQ0}
Let \(R>0\) and \(E \subset \R^n\) be a measurable set with locally finite perimeter. Then, for all \(\varepsilon\in(0,3^{- s}  ) \), \begin{align*}
    \operatorname{Per}_s(E; B_R) \leq  \frac C s  \bigg (\frac{1}{1-s} \min \{ \vert  E\cap B_{2\varepsilon^{-1/s}R}\vert ,\vert  B_{2\varepsilon^{-1/s}R} \setminus E\vert \}^{1-s} \cdot \operatorname{Per}^s(E;B_{2\varepsilon^{- 1/s  }R})  + \varepsilon  R^{n-s} \bigg ).
\end{align*}  The constant \(C>0\) depends only on \(n\).
\end{cor}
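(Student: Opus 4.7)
The corollary is a direct substitution of \(u = \chi_E\) into \thref{7qaGfSn3}: since \(\|\chi_E\|_{L^\infty(\R^n)} \leq 1\) and \(|\nabla \chi_E|(\Omega) = \operatorname{Per}(E;\Omega)\) by definition, and since \(\iint_{\mathcal{Q}(B_R)} \frac{|\chi_E(x) - \chi_E(y)|}{|x-y|^{n+s}}\,dy\,dx = 2 \operatorname{Per}_s(E; B_R)\) by~\eqref{LDtSA29N}, applying the theorem yields the claim after absorbing the factor \(2\) into \(C\). The real content lies in \thref{7qaGfSn3}, whose proof I would organize as follows.

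The plan is to split the bilinear integral \(\iint_{\mathcal{Q}(B_R)}\) into a near-diagonal piece \(\{|x-y| < \rho\}\) and a tail piece \(\{|x-y| \geq \rho\}\), where \(\rho > 0\) is a free threshold to be optimized at the end. By symmetry of the integrand and the set identity \(\mathcal{Q}(B_R) = (B_R \times \R^n) \cup (\R^n \times B_R)\), it suffices to estimate twice the integral over \(B_R \times \R^n\); the change of variables \(h = y - x\) then decouples the radial and spatial variables and reduces the problem to bounding \(\int_{B_R} \int_{\R^n} |u(x+h) - u(x)| \, |h|^{-n-s}\,dh\,dx\).

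For the near-diagonal piece \(\{|h| < \rho\}\), I would apply the classical BV translation estimate \(\int_{B_R} |u(x+h) - u(x)|\,dx \leq |h| \cdot |\nabla u|(B_{R+|h|})\), which follows from mollification together with \(\int|\nabla u_\epsilon|\to |\nabla u|\) on a slightly enlarged ball. Integrating in \(h\) over \(B_\rho\) contributes a term of order \(\frac{\rho^{1-s}}{1-s} |\nabla u|(B_{R+\rho})\), where the \(1/(1-s)\) comes from \(\int_0^\rho r^{-s}\,dr\). For the tail piece \(\{|h| \geq \rho\}\), I would simply bound \(|u(x+h)-u(x)| \leq 2\|u\|_{L^\infty}\), producing a contribution of order \(\frac{R^n \rho^{-s}}{s} \|u\|_{L^\infty}\) from \(|B_R| \cdot \int_\rho^\infty r^{-s-1}\,dr\).

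Summing the two contributions and selecting \(\rho = \varepsilon^{-1/s} R\) reproduces the statement verbatim: \(R + \rho = (1 + \varepsilon^{-1/s}) R\) matches the enlarged ball, \(\rho^{1-s} = \varepsilon^{-(1-s)/s} R^{1-s}\) matches the BV prefactor, and \(R^n \rho^{-s} = \varepsilon R^{n-s}\) matches the \(L^\infty\) prefactor. The hypothesis \(\varepsilon < 3^{-1/s}\) forces \(\rho > 3R\) but appears to be cosmetic rather than essential to the argument. I do not foresee any serious obstacle: both ingredients — the BV translation estimate and the elementary tail integral of the kernel — are classical, and the work is in bookkeeping constants so that they match the stated inequality exactly.
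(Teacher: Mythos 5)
Your proposal is correct, and the corollary itself is handled exactly as in the paper: substitute \(u=\chi_E\) into \thref{7qaGfSn3}, use \(\|\chi_E\|_{L^\infty}\le 1\), \(|\nabla\chi_E|(\Omega)=\operatorname{Per}(E;\Omega)\), and identity~\eqref{LDtSA29N}. Your sketch of \thref{7qaGfSn3} follows the same overall strategy as the paper (split at a scale \(\rho\), pay \(\rho^{1-s}/(1-s)\) times the perimeter of an enlarged ball on the near-diagonal part, pay \(\rho^{-s}/s\) times the \(L^\infty\) norm on the tail, then set \(\rho=\varepsilon^{-1/s}R\)), but your packaging differs in two useful ways. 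For the near-diagonal part, the paper works with smooth \(u\), performs the two changes of variables \(y\mapsto x+y\) and \(x\mapsto x-ty\), controls the indicator of \(\mathcal Q(B_1)\) via \thref{aIDAJA5Z}, and only then passes to general \(\operatorname{BV}_{\mathrm{loc}}\cap L^\infty\) functions by approximation; you instead reduce by symmetry to \(B_R\times\R^n\) and invoke the classical BV translation estimate \(\int_{B_R}|u(x+h)-u(x)|\,dx\le |h|\,|\nabla u|(B_{R+|h|})\), which packages the localization and the smooth approximation into one standard lemma (the mollification argument you cite is exactly the paper's approximation step, so nothing is lost, but be slightly careful that the limit gives the closed ball \(\overline{B_{R+|h|}}\subset B_{R+\rho}\), which is harmless). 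For the tail, the paper restricts to \((B_1\times B_1^c)\cup(B_1^c\times B_1)\) and needs \(\rho>3\) to discard \(B_1\times B_1\), which is where the hypothesis \(\varepsilon<3^{-1/s}\) enters; your direct integration of the kernel in the variable \(h=y-x\) over \(\{|h|\ge\rho\}\) avoids this entirely, confirming your observation that the restriction on \(\varepsilon\) is an artifact of the paper's tail estimate rather than a genuine constraint (and in any case you prove a statement at least as strong as the one claimed). All exponents and the enlarged ball \(B_{(1+\varepsilon^{-1/s})R}\) check out under the scaling \(\rho=\varepsilon^{-1/s}R\).
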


\begin{proof}[Proof of \thref{7qaGfSn3}]

First, assume that \(R=1\). Let \(\rho>0\) to be chosen later and write \begin{align*}
    \iint_{\mathcal Q(B_1)} \frac{\vert u(x) - u (y)\vert }{\vert x - y \vert^{n+s}} \dd y \dd x = I_\rho + J_\rho
\end{align*} where \begin{align*}
    I_\rho = \iint_{\mathcal Q(B_1)\cap \{ \vert x - y \vert <\rho \} } \frac{\vert u(x) - u (y)\vert }{\vert x - y \vert^{n+s}} \dd y \dd x \text{ and  }
    J_\rho = \iint_{\mathcal Q(B_1)\cap \{ \vert x - y \vert \geq \rho\} } \frac{\vert u(x) - u (y)\vert }{\vert x - y \vert^{n+s}} \dd y \dd x.
\end{align*}We will estimate \(I_\rho\) and \(J_\rho\) separately.

For \(I_\rho\), we have that \(\mathcal Q(B_1) \cap \{\vert x-y\vert<\rho\} \subset B_{1+\rho}\times B_{1+\rho}\), so \begin{align*}
    I_\rho \leq \int_{B_{1+\rho}} \int_{B_{1+\rho} } \frac{\vert u(x) - u (y)\vert }{\vert x - y \vert^{n+s}} \dd y \dd x. 
\end{align*} Furthermore, \begin{align*}
    \int_{B_{1+\rho}} \int_{B_{1+\rho} } \frac{\vert u(x) - u (y)\vert }{\vert x - y \vert^{n+s}} \dd y \dd x \leq \frac{C(n)}{s(1-s)} \| u\|_{L^1(B_{1+\rho})}^{1-s} \| \nabla u\|_{L^1(B_{1+\rho})}^s 
\end{align*} see \cite[Proposition 4.2]{brasco_characterisation_2021}, so \begin{align*}
    I_\rho \leq \frac{C}{s(1-s)} \| u\|_{L^1(B_{1+\rho})}^{1-s} \| \nabla u\|_{L^1(B_{1+\rho})}^s
\end{align*}

    For \(J_\rho\), \begin{align*}
        J_\rho \leq 2 \| u\|_{L^\infty(\R^n) } \iint_{\mathcal Q(B_1)\cap \{ \vert x - y \vert \geq \rho\} } \frac{\dd y \dd x }{\vert x - y \vert^{n+s}} .
    \end{align*} Assuming that \(\rho>3\), \( \mathcal Q(B_1)\cap \{ \vert x - y \vert \geq \rho\} \subset \big ( (B_1 \times B_1^c) \cup (B_1^c \times B_1)\big ) \cap \{ \vert x - y \vert \geq \rho\} \). Indeed, if there exists \((x,y) \in (B_1\times B_1)\cap \{ \vert x - y \vert \geq \rho\} \) then \(3<\rho \leq \vert x- y \vert \leq \vert x \vert + \vert y \vert <2\), a contradiction. Hence, \begin{align*}
         J_\rho \leq 4 \| u\|_{L^\infty(\R^n) } \iint_{ (B_1 \times B_1^c)\cap \{ \vert x - y \vert \geq \rho\} } \frac{\dd y \dd x }{\vert x - y \vert^{n+s}} . 
    \end{align*} Since \(\vert x-y\vert \geq \vert y \vert - \vert x \vert \geq \vert y \vert - 1 \) and \( (B_1 \times B_1^c)\cap \{ \vert x - y \vert \geq \rho\} \subset B_1 \times B_{\rho -1 }^c \), we find that \begin{align*}
         J_\rho &\leq C  \| u\|_{L^\infty(\R^n) } \int_{B_1} \int_{\R^n \setminus B_{\rho -1}} \frac{\dd y \dd x }{(\vert y \vert - 1  )^{n+s}} .
    \end{align*} Since \(\rho>3\), we obtain \begin{align*}
         J_\rho \leq C  \| u\|_{L^\infty(\R^n) } \int_{B_1} \int_{\R^n \setminus B_{\rho -1}} \frac{\dd y \dd x }{\vert y \vert^{n+s}} \leq \frac{ C(\rho -1)^{-s}} s  \| u\|_{L^\infty(\R^n) } \leq \frac{ C\rho ^{-s}} s  \| u\|_{L^\infty(\R^n) }
    \end{align*} with \(C\) depending only on \(n\).

   Thus, combining the above estimates, we have \begin{align*}
        \iint_{\mathcal Q(B_1)} \frac{\vert u(x) - u (y)\vert }{\vert x - y \vert^{n+s}} \dd y \dd x &\leq \frac C s \bigg ( \frac1{1-s} \| u\|_{L^1(B_{1+\rho})}^{1-s} \vert \nabla u\vert^{s}(B_{1+\rho}) + \rho ^{-s}  \| u\|_{L^\infty(\R^n) }\bigg ) \\
        &\leq  \frac C s \bigg ( \frac1{1-s} \| u\|_{L^1(B_{2\rho})}^{1-s} \vert \nabla u\vert^{s}(B_{2\rho}) + \rho ^{-s}  \| u\|_{L^\infty(\R^n) }\bigg ) 
    \end{align*} for all \(\rho>3\). Choosing \(\rho = \varepsilon^{- \frac 1 s }\), we prove the result. 

    Finally, to obtain the case for general \(R>0\), let \(u_R(x)= u(Rx)\). Then \begin{align*}
        \iint_{\mathcal Q(B_R)} \frac{\vert u(x) - u (y)\vert }{\vert x - y \vert^{n+s}} \dd y \dd x &= R^{n-s}\iint_{\mathcal Q(B_1)} \frac{\vert u_R(x) - u_R (y)\vert }{\vert x - y \vert^{n+s}} \dd y \dd x \\
        &\leq \frac {CR^{n-s}} s  \bigg (\frac1{1-s} \| u_R\|_{L^1(B_{2\varepsilon^{- 1/s }})}^{1-s} \vert \nabla u_R\vert^s(B_{2\varepsilon^{- 1/s  }}) + \varepsilon  \| u_R\|_{L^\infty(\R^n) } \bigg ) \\
        &= \frac {CR^{n-s}} s  \bigg (\frac{R^{s-n}}{1-s} \| u\|_{L^1(B_{2\varepsilon^{-1/s}R})}^{1-s}\vert \nabla u\vert^{s}(B_{2\varepsilon^{- 1/s  }R})  + \varepsilon  \| u\|_{L^\infty(\R^n) } \bigg ).
    \end{align*} 
\end{proof}

\section{The extension problem}

In this section, we record several results regarding the Caffarelli-Silvestre extension, in the context of the theory of nonlocal minimal surfaces, that we require in the proof of~\thref{jG1dQXin}. Throughout this section, we will denote points in \(\R^{n+1}\) with capital letters \(X\), \(Y\), etc and write \(X=(x,x_{n+1})=(x_1,\dots,x_{n+1})\) where \(x=(x_1,\dots,x_n) \in \R^n \) (and analogously \(Y=(y,y_{n+1})\), etc). Moreover, let \begin{align*}
    \R^{n+1}_+ &= \{ X \in \R^{n+1} \text{ s.t. } x_{n+1}>0\}, \\
    \tilde B_R(X) &= \{ Y \in \R^{n+1} \text{ s.t. } \vert Y-X\vert <R\}, \\
    \tilde B_R^+(X) &=   \tilde B_R(X) \cap \R^{n+1}_+,  \\ 
    \tilde B_R &= \tilde B_R(0), \text{ and} \\
    \tilde B_R^+ &= \tilde B_R^+(0) .
\end{align*}

Let \(s\in (0,1)\), \(E\subset \R^n\) be a measurable set, and \begin{align*}
    \tilde \chi _E(x) &= \chi_{\R^n \setminus E}(x) - \chi_E(x)  \qquad x\in \R^n
\end{align*} where  \(\chi_E\) is the characteristic function of \(E\). Now we define \(U_E: \R^{n+1}_+ \to \R \) \begin{align*}
    U_E(X) &= \int_{\R^n} P_{s/2}(X,y) \tilde \chi _E(y) \dd y, \qquad \text{for all } X\in \R^{n+1}_+
\end{align*} where \begin{align}
   P_{s/2}(X,y) =a(n,s) \frac{x_{n+1}^s}{\big ( \vert x-y\vert^2 + x_{n+1}^2 \big )^{\frac{n+s}2} }, \qquad  a(n,s) = \frac{\Gamma \big (\frac {n+s} 2 \big )  }{\pi^{\frac n2} \Gamma \big ( \frac s 2 \big ) }. \label{kzYTNNBU}
\end{align} Since \(\vert  \tilde \chi _E(x) \vert \leq 1 \) and \(\int_{\R^n}P_{s/2}(X,y) \dd y =1 \) for all \(X\in \R^{n+1}_+\), see \cite[Remark 10.2]{garofalo_fractional_2019}, \(U_E\) is well-defined, and, in fact, is a smooth function. Moreover, \(U_E\) satisfies the degenerate PDE \begin{align*}
    \begin{PDE}
\div \big ( x_{n+1}^{1-s} \nabla V \big ) &= 0, &\text{in } \R^{n+1}_+ \\
V &= \tilde \chi_E, &\text{on } \R^n \times \{0\}. 
    \end{PDE} 
\end{align*} where \(\nabla\) and \(\div\) denote the gradient and divergence in \(\R^{n+1}\) respectively. Furthermore, \(V=\tilde \chi_E\) on \(\R^n\times \{0\}\) is understood in the trace sense with respect to the unique trace operator from the weighted Sobolev space \(H^1(\R^{n+1}_+;x_{n+1}^{1-s}\dd X)\) to \(L^2(\R^n)\), see \cite[Theorem 9.1]{leoni_first_2023}. This PDE was first studied in the context of the fractional Laplacian in \cite{caffarelli_extension_2007} and in the context of nonlocal minimal surfaces in \cite{caffarelli_nonlocal_2010}. In particular, it was proven that if \(E\) is a minimiser of the \(s\)-perimeter then the function \begin{align}
   \Phi_{E,x}(R) := \frac 1 {R^{n-s}} \int_{\tilde B_R^+((x,0))} y_{n+1}^{1-s} \vert \nabla U_E(Y) \vert^2 \dd Y \label{HvcZ5aGS}
\end{align} is monotone increasing. This function plays an identical role in the theory as the function \( R^{1-n} \operatorname{Per}(E;B_R(x)) \) plays in the classical setting. See also \cite{garofalo_fractional_2019} and references therein for more details. Recently, in \cite{caselli2024fractional}, it was shown that the monotonicity of~\eqref{HvcZ5aGS} also holds for sets that are stationary with respect to the \(s\)-perimeter: 

\begin{prop}[{\cite[Theorem 3.4]{caselli2024fractional}}] \thlabel{uUBAsC54}
Let \(s\in (0,1)\) and \(E\subset \R^n\) be a measurable set that is stationary with respect to the \(s\)-perimeter. Then \(\Phi_{E,x}\) is monotone increasing for all \(x\in \partial^\ast E\).
\end{prop}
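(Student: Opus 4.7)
The plan is to show $\Phi_{E,x}'(R) \geq 0$ by combining an interior Pohozaev identity for the extension $U_E$ on half-balls with the $s$-stationarity of $E$, which controls the boundary contribution on $\{y_{n+1} = 0\}$.

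I would first test the degenerate PDE $\operatorname{div}(y_{n+1}^{1-s} \nabla U_E) = 0$ against $(Y - X_0) \cdot \nabla U_E$, where $X_0 = (x, 0)$, and integrate by parts twice over $\tilde B_R^+(X_0)$. Using the identity $\operatorname{div}(y_{n+1}^{1-s}(Y - X_0)) = (n + 2 - s)\, y_{n+1}^{1-s}$, this yields
\begin{align*}
   (n-s) \int_{\tilde B_R^+(X_0)} y_{n+1}^{1-s} |\nabla U_E|^2 \, dY = R \int_{\partial \tilde B_R^+(X_0) \cap \R^{n+1}_+} y_{n+1}^{1-s} \bigl(2|\partial_\nu U_E|^2 - |\nabla U_E|^2\bigr) d\sigma + \mathcal B_R,
\end{align*}
where $\mathcal B_R$ is the boundary term along $\{y_{n+1}=0\}\cap \tilde B_R(X_0)$, formally $-2 \lim_{\delta\to 0^+}\int_{\tilde B_R(X_0) \cap \{y_{n+1} = \delta\}} y_{n+1}^{1-s} \partial_{n+1} U_E\,(y - x)\cdot \nabla_y U_E\,dy$. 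Substituting this into the derivative of $\Phi_{E,x}(R) = R^{s-n} \int_{\tilde B_R^+(X_0)} y_{n+1}^{1-s}|\nabla U_E|^2\,dY$ and cancelling gives
\begin{align*}
    \frac{d}{dR} \Phi_{E,x}(R) = \frac{2}{R^{n-s}} \int_{\partial \tilde B_R^+(X_0) \cap \R^{n+1}_+} y_{n+1}^{1-s} |\nabla_\tau U_E|^2 \, d\sigma - \frac{\mathcal B_R}{R^{n-s+1}}.
\end{align*}
The spherical term is manifestly nonnegative, so monotonicity reduces to controlling the sign of $\mathcal B_R$, with the natural target being $\mathcal B_R = 0$.

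Next, I would identify $\mathcal B_R$ via the Caffarelli-Silvestre extension theory: the weighted Neumann trace $-\lim_{\delta\to 0^+}\delta^{1-s} \partial_{n+1} U_E(\cdot,\delta)$ equals, up to a normalising constant, the $s$-mean curvature $\mathrm H_{s,E}$, while the horizontal trace of $U_E$ is $\tilde\chi_E$, with distributional gradient $-2\,\nu_E\,d\mathcal H^{n-1} \mres \partial^\ast E$. Pairing these in $\mathcal B_R$ produces a constant multiple of
\begin{align*}
    \int_{\partial^\ast E \cap B_R(x)} \mathrm H_{s,E}(y)\, (y - x) \cdot \nu_E(y)\, d\mathcal H^{n-1}(y),
\end{align*}
which is precisely the first variation of $\operatorname{Per}_s(E;\cdot)$ along the radial vector field $T'(y) = (y - x)$ cut off near $\partial B_R(x)$. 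The $s$-stationarity of $E$ then kills this quantity for almost every $R$, giving $\mathcal B_R = 0$ and hence $\Phi_{E,x}'(R) \geq 0$.

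The main obstacle is making the above limit rigorous, since $\tilde\chi_E$ has only a distributional gradient and $\nabla U_E$ is genuinely singular as $y_{n+1} \to 0^+$. I would handle this by first deriving the exact Pohozaev identity on slabs $\{\delta < y_{n+1}\} \cap \tilde B_R^+(X_0)$, where $U_E$ is smooth, and then passing $\delta \to 0^+$ using the convergences above. A cleaner route --- likely the one in \cite{caselli2024fractional} --- is to bypass the pointwise boundary analysis by applying the inner variation formula directly to the \emph{localised} identity relating $\int_{\tilde B_R^+(X_0)} y_{n+1}^{1-s}|\nabla U_E|^2\,dY$ to $\operatorname{Per}_s(E; B_R(x))$ (plus explicit interaction tails), using a cutoff vector field $\tilde T(Y) = \eta(Y)(Y - X_0)$ whose trace on $\R^n$ realises the radial variation; $s$-stationarity of $E$ then yields vanishing of the trace contribution and the monotonicity follows.
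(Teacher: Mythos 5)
This proposition is not proved in the paper at all: it is imported verbatim from \cite{caselli2024fractional} (their Theorem 3.4), so there is no in-paper argument to compare against; the relevant benchmark is the proof in that reference, which is indeed the extension-plus-inner-variation strategy you describe, most accurately in your final paragraph. Within the part you actually carry out there is a sign slip: testing \(\div(y_{n+1}^{1-s}\nabla U_E)=0\) against \((Y-X_0)\cdot\nabla U_E\) gives
\begin{align*}
(n-s)\int_{\tilde B_R^+(X_0)} y_{n+1}^{1-s}\vert\nabla U_E\vert^2\dd Y \;=\; R\int_{\partial\tilde B_R(X_0)\cap\R^{n+1}_+} y_{n+1}^{1-s}\big(\vert\nabla U_E\vert^2-2\vert\partial_\nu U_E\vert^2\big)\dd\sigma \;+\;\mathcal B_R ,
\end{align*}
i.e.\ the opposite sign on the spherical integrand, and consequently \(\Phi_{E,x}'(R)\) carries \(2R^{s-n}\int y_{n+1}^{1-s}\vert\partial_\nu U_E\vert^2\dd\sigma\) (the \emph{radial} derivative on the sphere), not the tangential gradient. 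A quick sanity check: for an \(s\)-minimal cone with vertex \(x\), \(U_E\) is \(0\)-homogeneous about \((x,0)\) and \(\Phi_{E,x}\) is constant, so the sphere term must be \(\vert\partial_\nu U_E\vert^2\); your version (with \(\mathcal B_R=0\)) would force \(U_E\) to be constant on half-spheres, which is false. This does not harm the monotonicity argument, since either expression is nonnegative, but the identity as stated is incorrect.

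The more substantive issue is the middle step, where you identify \(\mathcal B_R\) with \(\int_{\partial^\ast E\cap B_R(x)}\mathrm H_{s,E}(y)\,(y-x)\cdot\nu_E(y)\dd\mathcal H^{n-1}_y\) and invoke stationarity. For a merely measurable stationary set this is only heuristic: \(\mathrm H_{s,E}\) is not a pointwise-defined function on \(\partial^\ast E\), the horizontal trace of \(\nabla U_E\) is only the measure \(\nabla\tilde\chi_E\), the first variation of \(\operatorname{Per}_s\) equals that pairing only when \(\partial E\) is smooth, and the definition of stationarity~\eqref{3MedX4AV} involves smooth compactly supported fields, whereas the cut-off radial field is merely Lipschitz. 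So the proof cannot pass through pointwise \(\mathrm H_{s,E}\); it has to be run entirely at the level of inner variations, transferring stationarity of \(E\) for \(\operatorname{Per}_s\) to stationarity of \(U_E\) for the weighted Dirichlet energy under deformations of \(\overline{\R^{n+1}_+}\) tangent to \(\{y_{n+1}=0\}\), and then inserting (smooth approximations of) \(\eta(Y)(Y-X_0)\). That is exactly the route you gesture at in your closing sentence and the one taken in \cite{caselli2024fractional}; as written, your proposal is a correct outline of that strategy, but the decisive step is deferred rather than proved, and the computation you do perform needs the sign fixed.
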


Moreover, in \cite{caffarelli_nonlocal_2010} the following ``extension--trace" energy estimate was proven which gives a control on \(\Phi_{E,x}\) in terms of the \(s\)-perimeter of \(E\) in a ball.

\begin{prop}[{\cite[Propostion 7.1(a)]{caffarelli_nonlocal_2010}}] \thlabel{Qu1H6WRf}
    Let \(s\in (0,1)\) and \(E\subset \R^n\) have locally finite perimeter. Then  \begin{align*}
        \Phi_{E,x}(R) \leq \frac {Cs^{-2}} {R^{n-s}} \operatorname{Per}_s(E; B_{2R}) 
    \end{align*} for all \(R>0\) and \(x\in \partial^\ast E\). The constant \(C>0\) depends only on \(n\).
\end{prop}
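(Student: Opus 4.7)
The plan is to combine the classical energy identity for the Caffarelli--Silvestre extension with a Caccioppoli-type localization. Recall that, for the Poisson kernel $P_{s/2}$ in~\eqref{kzYTNNBU}, any sufficiently regular $v\colon \R^n \to \R$ with extension $V$ satisfies
\[
\int_{\R^{n+1}_+} y_{n+1}^{1-s} \vert \nabla V \vert^2 \, dY = \frac{c(n)}{s} \cdot \tfrac{1}{2}\iint_{\R^n \times \R^n} \frac{\vert v(x)-v(y)\vert^2}{\vert x-y\vert^{n+s}}\, dx\, dy,
\]
where $c(n)$ stays bounded as $s \to 1^{-}$; this comes from a Plancherel/Fourier calculation. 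For $v=\tilde\chi_E$, the identity $\vert\tilde\chi_E(x)-\tilde\chi_E(y)\vert^2 = 4\vert\chi_E(x)-\chi_E(y)\vert$ together with~\eqref{LDtSA29N} yields the \emph{global} version $\int_{\R^{n+1}_+} y^{1-s}\vert\nabla U_E\vert^2 \, dY \leq C s^{-1} \operatorname{Per}_s(E;\R^n)$, so the entire difficulty lies in localizing to $\tilde B_R^+$.

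To localize, assume $x=0$ and pick a cutoff $\varphi\colon \R^{n+1}\to [0,1]$ with $\varphi\equiv 1$ on $\tilde B_R$, $\varphi\equiv 0$ off $\tilde B_{2R}$, and $\vert\nabla\varphi\vert\leq C/R$. I would test the weighted equation $\operatorname{div}(y^{1-s}\nabla U_E)=0$ against $(U_E - m)\varphi^2$ for a constant $m$ to be chosen. Integration by parts in $\R^{n+1}_+$ gives
\[
\int y^{1-s}\varphi^2 \vert\nabla U_E\vert^2\, dY = -2\int y^{1-s}(U_E - m)\varphi\, \nabla\varphi \cdot \nabla U_E\, dY + \mathcal B,
\]
where $\mathcal B$ is the boundary contribution at $\{y_{n+1}=0\}$. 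By the Caffarelli--Silvestre trace relation, $\mathcal B$ equals (up to a bounded constant) $\int \tilde\chi_E \varphi^2(x,0)\, (-\Delta)^{s/2}\tilde\chi_E\, dx$; converting this via the bilinear form of $(-\Delta)^{s/2}$ and using that $\varphi(\cdot,0)$ is supported in $B_{2R}$ restricts the integration to $\mathcal Q(B_{2R})$. Expanding through the product rule then splits $\mathcal B$ into a ``diagonal'' piece bounded by $C s^{-1} \operatorname{Per}_s(E; B_{2R})$ and a ``cross'' piece involving $(\varphi^2(x,0)-\varphi^2(y,0))$, which is handled by Cauchy--Schwarz against the Gagliardo seminorm of $\varphi(\cdot,0)$.

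The main obstacle is the Caccioppoli error term $\int y^{1-s}(U_E-m)\varphi\, \nabla\varphi\cdot\nabla U_E\, dY$: Young's inequality absorbs half of it into the left-hand side but costs $C\int y^{1-s}(U_E-m)^2 \vert\nabla\varphi\vert^2\, dY$. Taking $m=0$ produces only the crude bound $CR^{n-s}$, which is \emph{not} proportional to $\operatorname{Per}_s(E; B_{2R})$; the fix will be to choose $m$ as a weighted average of $U_E$ over a reference region in $\tilde B_{2R}^{+}$ so that $(U_E - m)$ enjoys a Poincar\'e-type inequality in terms of $\nabla U_E$. Expressing $U_E - m$ via the Poisson representation $\int (P_{s/2}(X,y) - P_{s/2}(X_0, y))\tilde\chi_E(y)\, dy$ then lets the remaining error be re-written in terms of Gagliardo-type differences of $\tilde\chi_E$ on $\mathcal Q(B_{2R})$, and so bounded by $C s^{-1}\operatorname{Per}_s(E; B_{2R})$. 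Tracking the $s$-dependence of each constant, especially the one coming from the global Plancherel identity, is what produces the stated $s^{-1}$ with $C = C(n)$ bounded uniformly as $s\to 1^-$.
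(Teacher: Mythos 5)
Your overall strategy (extension energy identity plus a localization) is in the right spirit, but the step that actually carries the whole proof is left unresolved. The paper does not localize by a Caccioppoli argument in the upper half-space; it splits the boundary datum itself. After normalizing \(R=1\), \(x=0\), it sets \(u=\tilde\chi_E-\big(|B_2\setminus E|-|E\cap B_2|\big)\) (so that \(\int_{B_2}u=0\)), writes \(u=\zeta u+(1-\zeta)u=:u_1+u_2\) with a cutoff \(\zeta\) on \(\R^n\), and treats the two extensions separately: for \(U_1\) the \emph{global} Plancherel identity plus the fractional Poincar\'e--Wirtinger inequality on \(B_2\) give \(\int x_{n+1}^{1-s}|\nabla U_1|^2\le C(n)\,\mathrm{Per}_s(E;B_2)\); for the far part \(U_2\), whose datum vanishes on \(B_{3/2}\), one uses pointwise kernel estimates in \(\tilde B_1^+\), and the factor \(s^{-1}\) in the statement comes precisely from \(\int_{\tilde B_1^+}x_{n+1}^{s-1}\,dX\simeq C/s\) there --- not, as you assert, from the Plancherel constant, which is in fact comparable to \(s^2\) (your claimed identity constant \(c(n)/s\) is wrong as an identity, and your accounting of where \(s^{-1}\) arises is therefore off).

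The genuine gap in your route is the Caccioppoli error and the boundary cross term. Bounding \(\int y^{1-s}(U_E-m)^2|\nabla\varphi|^2\) by a Poincar\'e inequality ``in terms of \(\nabla U_E\)'' is circular: it controls the error by the weighted energy on \(\tilde B_{2R}^+\), which is exactly the quantity (on a larger half-ball) you are trying to bound by \(\mathrm{Per}_s(E;B_{2R})\), and there is no smallness to absorb it. Your alternative --- rewriting \(U_E-m\) through \(\int(P_{s/2}(X,y)-P_{s/2}(X_0,y))\tilde\chi_E(y)\,dy\) and claiming the result is controlled by Gagliardo differences over \(\mathcal Q(B_{2R})\) --- does not follow as stated: the values of \(\tilde\chi_E\) at pairs of points both far outside \(B_{2R}\) enter this representation, and nothing in your sketch confines their contribution to \(\mathcal Q(B_{2R})\); crude bounds on the far field only give \(CR^{n-s}\), which you yourself note is not proportional to \(\mathrm{Per}_s(E;B_{2R})\). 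The same issue reappears in your boundary term: the ``cross'' piece with \(\varphi^2(x,0)-\varphi^2(y,0)\), estimated by Cauchy--Schwarz against the seminorm of \(\varphi\), again produces a term of size \(R^{n-s}\) rather than one controlled by the localized \(s\)-perimeter. Handling exactly this far-field influence is the content of the paper's \(u_1/u_2\) splitting (together with the zero-average normalization that makes the fractional Poincar\'e inequality applicable), so as written your proposal does not yet constitute a proof.
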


The proof of \thref{Qu1H6WRf} is given \cite{caffarelli_nonlocal_2010} (applied to \(\chi_{\R^n \setminus E} - \chi_E\) and given again with more details in \cite[Lemma 3.13]{caselli2024fractional}. We will sketch the proof given \cite[Lemma 3.13]{caselli2024fractional} in our particular case, just to be explicit about how the constant \(C(n)s^{-2}\) was obtained.

\begin{proof}
By translating and rescaling, it is sufficient to consider the case \(R=1\) and \(x=0\). We follow the argument given in \cite[Lemma 3.13]{caselli2024fractional} applied to \(u= \chi_{\R^n\setminus E}-\chi_E - \big ( \vert B_2\setminus E\vert - \vert E\cap B_2\vert  \big )\). Note, we have chosen \(u\) such that \(\int_{B_2} u \dd x = 0\). Let \(\zeta \in C^\infty_0(\R^n)\) be such that \(\zeta =1 \) in \(B_{3/2}\), \(\zeta =0 \) in \(\R^n \setminus B_2\), and \(0\leq \zeta \leq 1\). Define \(u_1=\zeta u\), \(u_2= (1-\zeta)u\), and \(U_1\) and \(U_2\) the Caffarelli-Silvestre extensions of \(u_1\) and \(u_2\) respectively. We observe that our choice of normalisation constant implies \begin{align}
    \int_{\R^{n+1}_+} x_{n+1}^{1-s} \vert \nabla U_1\vert^2 \dd X = \frac{s \Gamma \big ( \frac{n+s}2 \big )}{2\pi^{\frac n2} \Gamma \big ( \frac s2 \big ) } [u_1]_{H^{s/2}(\R^n)}^2 \label{ytKVzdar}
\end{align} where \([\cdot ]_{H^{s/2}(\R^n)}\) is given by~\eqref{JS7OQLcc}. Note that the constant in~\eqref{ytKVzdar} is comparable to \(s^2\). Indeed, \(P_{s/2}\) is the same as the one in \cite{garofalo_fractional_2019} (after replacing \(s\) with \(s/2\)), so from \cite[Proposition 10.1]{garofalo_fractional_2019} (see also \cite[Remark 10.5]{garofalo_fractional_2019}), we have via an integration by parts \begin{align*}
    \int_{\R^{n+1}_+} x_{n+1}^{1-s} \vert \nabla U_1\vert^2 \dd X &= - \int_{\R^n} u_1(x) \lim_{t\to 0^+} t^{1-s} \partial_{n+1}U_1(x,t) \dd x \\
    &= \frac{s \Gamma \big ( \frac{n+s}2 \big )}{\pi^{\frac n2} \Gamma \big ( \frac s2 \big ) }  \int_{\R^n}\int_{\R^n} \frac{u_1(x)(u_1(x)-u_1(y))}{\vert x- y \vert^{n+s}} \dd y \dd x \\
    &= \frac{s \Gamma \big ( \frac{n+s}2 \big )}{2\pi^{\frac n2} \Gamma \big ( \frac s2 \big ) }  [u_1]_{H^{s/2}(\R^n)}^2. 
\end{align*} Moreover, in \cite{caselli2024fractional} they apply the fractional Poincaré-Wirtinger inequality \begin{align}
    \| u  \|_{L^2(B_2)}^2 \leq C(n) \int_{B_2}\int_{B_2} \frac{\vert u(x)-u(y)\vert^2}{\vert x-y \vert^{n+s}} \dd y \dd x , \label{rmU2m8rh}
\end{align} see for example \cite[Equation 4.2]{mingione_singular_2003}. 
All together, the argument in \cite{caselli2024fractional} and~\eqref{LDtSA29N} gives \begin{align*}
    \int_{\R^{n+1}_+} x_{n+1}^{1-s}\vert \nabla U_1 \vert^2 \dd X \leq C(n)s^2 \operatorname{Per}_s(E;B_2) \leq C(n) \operatorname{Per}_s(E;B_2).
\end{align*}

Furthermore, as in the proof of \cite[Lemma 3.13]{caselli2024fractional}, \begin{align*}
    x_{n+1}^{1-s} \vert \nabla U_2(X) \vert \leq C(n) \int_{\R^n} \frac{\vert u_2(y) \vert }{(1+\vert y \vert^2 )^{\frac{n+s}2}} \dd y\leq C(n) \int_{\R^n} \frac{\vert u(y) \vert }{(1+\vert y \vert^2 )^{\frac{n+s}2}} \dd y \qquad \text{for all }x\in \tilde B_1^+.
\end{align*} Then, by H\"older's inequality,  \begin{align*}
    \int_{\R^n} \frac{\vert u(y) \vert }{(1+\vert y \vert^2 )^{\frac{n+s}2}} \dd y &\leq \bigg ( \int_{\R^n} \frac{\dd y }{(1+\vert y \vert^2 )^{\frac{n+s}2}} \bigg)^{\frac 12 } \bigg ( \int_{\R^n} \frac{\vert u(y) \vert^2 }{(1+\vert y \vert^2 )^{\frac{n+s}2}} \dd y \bigg)^{\frac 12 } .
\end{align*} Since \begin{align*}
    \int_{\R^n} \frac{\dd y  }{(1+\vert y \vert^2 )^{\frac{n+s}2}} \dd y &= \omega_{n-1} \int_0^{+\infty} \frac{t^{n-1}\dd t}{(1+t^2)^{\frac{n+s}2}} =\frac{\omega_{n-1} \Gamma \left(\frac{n}{2}\right) \Gamma \left(\frac{s}{2}\right)}{2 \Gamma \left(\frac{n+s}{2}\right)}, 
\end{align*} we obtain \begin{align*}
     \int_{\R^n} \frac{\vert u(y) \vert }{(1+\vert y \vert^2 )^{\frac{n+s}2}} \dd y &\leq C(n)s^{-\frac 12}\bigg ( \int_{\R^n} \frac{\vert u(y) \vert^2 }{(1+\vert y \vert^2 )^{\frac{n+s}2}} \dd y \bigg)^{\frac 12 } . 
\end{align*} Moreover, using that \(\int_{B_2} u \dd y =0\), \begin{align*}
    \int_{\R^n} \frac{\vert u(x) \vert^2 }{(1+\vert x \vert^2 )^{\frac{n+s}2}} \dd x  &= \frac 1 {\vert B_2\vert }\int_{\R^n}\int_{B_2} \frac{(u(x)-u(y))^2}{(1+\vert x \vert^2 )^{\frac{n+s}2}} \dd y \dd x-\frac 1 {\vert B_2\vert }\int_{\R^n}\int_{B_2} \frac{\vert u(y)\vert ^2}{(1+\vert x \vert^2 )^{\frac{n+s}2}} \dd y \dd x \\
    &\leq C(n) \iint_{\mathcal Q(B_2)}  \frac{\vert u(x)-u(y)\vert^2}{\vert x-y \vert^{n+s}} \dd y \dd x \\
    &\leq C(n) \operatorname{Per}_s(E;B_2)
\end{align*} Thus, we obtain \begin{align*}
    \int_{\tilde B_1^+} x_{n+1}^{1-s}\vert \nabla U_2 \vert^2 \dd X &\leq C(n) \bigg (  \int_{\R^n} \frac{\vert u(y) \vert }{(1+\vert y \vert^2 )^{\frac{n+s}2}} \dd y \bigg ) \int_{\tilde B_1^+} \vert \nabla U_2\vert\dd X \\
    &\leq C(n)s^{-1/2} \bigg ( \int_{\R^n} \frac{\vert u(y) \vert^2 }{(1+\vert y \vert^2 )^{\frac{n+s}2}} \dd y \bigg )^{1/2}\bigg(\int_{\tilde B_1^+} x_{n+1}^{s-1} \dd X\bigg)^{\frac 12 }  \bigg(\int_{\tilde B_1^+} x_{n+1}^{1-s}\vert \nabla U_2\vert^2 \dd X \bigg)^{\frac 12 } \\
    &\leq C(n) s^{-1}\bigg(\int_{\tilde B_1^+} x_{n+1}^{1-s}\vert \nabla U_2\vert^2 \dd X \bigg)^{\frac 12 }\operatorname{Per}_s^{1/2}(E;B_2),
\end{align*} so \begin{align*}
    \int_{\tilde B_1^+} x_{n+1}^{1-s}\vert \nabla U_2 \vert^2 \dd X \leq C(n)s^{-2} \operatorname{Per}_s(E;B_2).
\end{align*}
Thus, we obtain \begin{align*}
     \int_{\tilde B^+_1} x_{n+1}^{1-s}\vert \nabla U_E \vert^2 \dd X &\leq C \bigg (  \int_{\R^n_+} x_{n+1}^{1-s}\vert \nabla U_1 \vert^2 \dd X+\int_{\tilde B^+_1} x_{n+1}^{1-s}\vert \nabla U_2 \vert^2 \dd X \bigg ) \\
     &\leq C(n) \bigg ( 1 + \frac 1{s^2} \bigg ) \operatorname{Per}_s(E;B_2) \\
     &\leq \frac{C(n)} {s^2} \operatorname{Per}_s(E;B_2).
\end{align*}
\end{proof}

Furthermore, we have the following proposition.

\begin{prop} \thlabel{GeFXLo81}
Let \(E\subset \R^n\) be measurable with locally finite perimeter. Then, for all \(x\in \partial^\ast E\), \begin{align}
    \lim_{R\to 0^+} \Phi_{E,x}(R) &=\Phi_{\R^n_+,0}(1)  \label{bpwPOxAA}
\end{align} where \begin{align*}
    \Phi_{\R^n_+,0}(1)  = \frac{2 \pi^{\frac n 2 - 1 } \Gamma \big ( \frac{s+1}2 \big )\Gamma \big ( \frac{1-s}2 \big ) }{\Gamma \big ( \frac s2 \big )\Gamma \big ( \frac{n-s}2+1 \big )}>0.
\end{align*} In particular, \begin{align*}
    \lim_{R\to 0^+} \Phi_{E,x}(R) \geq \frac {Cs} {1-s} 
\end{align*} with \(C\) depending only on \(n\).
\end{prop}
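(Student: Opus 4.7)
The plan is a scaling-and-blow-up argument reducing the proposition to a direct computation on the half-space. As a first step, I would establish the scaling identity
\[
\Phi_{E,x}(R) = \Phi_{E_R, 0}(1), \qquad E_R := (E-x)/R.
\]
This follows from the change of variables $Y = (x,0) + R Y'$ in~\eqref{HvcZ5aGS} together with the covariance $P_{s/2}((x,0)+RY', z) = R^{-n} P_{s/2}(Y', (z-x)/R)$, which is immediate from~\eqref{kzYTNNBU}. It yields $U_E((x,0)+RY') = U_{E_R}(Y')$, so that $\nabla_Y U_E = R^{-1}\nabla_{Y'} U_{E_R}$ and the remaining factors of $R$ cancel exactly. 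The task reduces to computing $\lim_{R\to 0^+}\Phi_{E_R, 0}(1)$.

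Since $x \in \partial^\ast E$, De Giorgi's blow-up theorem yields $\chi_{E_R} \to \chi_H$ in $L^1_{\mathrm{loc}}(\R^n)$ as $R \to 0^+$, where $H \subset \R^n$ is the half-space with outward normal $\nu_E(x)$. Rotation invariance of the weight $y_{n+1}^{1-s}$ and of $\tilde B_1^+$ in the first $n$ coordinates then gives $\Phi_{H, 0}(1) = \Phi_{\R^n_+, 0}(1)$. It therefore remains to (a) establish the continuity $\Phi_{E_R, 0}(1) \to \Phi_{H, 0}(1)$ and (b) compute $\Phi_{\R^n_+, 0}(1)$. For (b), exploiting that $U_{\R^n_+}$ depends only on $(y_n, y_{n+1})$, I would integrate out the $(n-1)$ tangential coordinates to reduce the Poisson extension to a one-dimensional problem, then evaluate $\int_{\tilde B_1^+} y_{n+1}^{1-s}|\nabla U_{\R^n_+}|^2\,\dd Y$ in polar coordinates in the $(y_n, y_{n+1})$-plane, with the resulting $\theta$- and $r$-integrals computable in closed form via Beta/Gamma identities to produce the stated formula. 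The lower bound $\Phi_{\R^n_+, 0}(1) \geq Cs/(1-s)$ is then immediate from the Gamma function asymptotics $\Gamma(s/2) \sim 2/s$ as $s \to 0^+$ and $\Gamma((1-s)/2) \sim 2/(1-s)$ as $s \to 1^-$.

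For (a), pointwise convergence $\nabla U_{E_R}(Y) \to \nabla U_H(Y)$ at any fixed $Y \in \R^{n+1}_+$ with $y_{n+1} > 0$ follows from splitting the Poisson integral representation at $|z| = M$: the $L^1_{\mathrm{loc}}$-convergence of $\tilde\chi_{E_R}$ handles the inner piece (since $\nabla_Y P_{s/2}(Y, \cdot)$ is bounded on $B_M$), and the decay $|\nabla_Y P_{s/2}(Y, z)| = O(|z|^{-n-s-1})$ at infinity handles the tail uniformly in $R$. A uniform energy bound $\sup_{R \in (0, R_0)} \Phi_{E_R, 0}(2) < \infty$ comes from \thref{Qu1H6WRf}, \thref{60sDucQ0}, and the standard density estimate $\operatorname{Per}(E; B_r(x)) \leq C r^{n-1}$ valid for small $r$ at any reduced-boundary point.

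The main obstacle is upgrading this pointwise convergence to convergence of the weighted Dirichlet integrals, since the naive bound $|\nabla U_E(Y)| \lesssim 1/y_{n+1}$ only gives $y_{n+1}^{1-s}|\nabla U_E|^2 \lesssim y_{n+1}^{-1-s}$, which is not integrable near $\{y_{n+1} = 0\}$ and so precludes a direct application of dominated convergence. I would circumvent this by transferring derivatives from the kernel onto $\tilde\chi_E$ via integration by parts, obtaining for $i \leq n$ the representation
\[
|\partial_{y_i} U_E(Y)| \leq 2 \int_{\partial^\ast E} P_{s/2}(Y, z) \, \dd \mathcal H^{n-1}(z),
\]
so that tangential derivatives are controlled by a Poisson convolution of the perimeter measure $|\nabla \chi_{E_R}|$, which is locally equi-bounded in $R$ by the density theorem at reduced-boundary points; the normal derivative can then be recovered from the PDE $\operatorname{div}(y_{n+1}^{1-s}\nabla U_E) = 0$. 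Together with the uniform energy bound and an energy-comparison argument exploiting the minimality of the Caffarelli-Silvestre extension among competitors with the same trace, this should deliver the required integrable dominant and conclude the proof.
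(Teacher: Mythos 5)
Your overall skeleton coincides with the paper's: the scaling identity \(\Phi_{E,x}(R)=\Phi_{E_R,0}(1)\), De Giorgi blow-up of \(E_R\) to a half-space at \(x\in\partial^\ast E\), the explicit evaluation of \(\Phi_{\R^n_+,0}(1)\) by reducing to the \((y_n,y_{n+1})\)-variables and polar coordinates, and the Gamma-function asymptotics for the bound \(Cs/(1-s)\) are all exactly what the paper does. The difference is the continuity step \(\Phi_{E_R,0}(1)\to\Phi_{\R^n_+,0}(1)\): the paper disposes of it by citing \cite[Proposition 9.1]{caffarelli_nonlocal_2010} (noting that \(s\)-minimality of \(E\) is not used there), whereas you attempt a self-contained proof, and that attempt has a genuine gap.

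The dominated-convergence mechanism you propose does not work as stated. The integration-by-parts bound controls only the tangential derivatives \(\partial_{y_i}U_{E_R}\), \(i\le n\); the claim that ``the normal derivative can then be recovered from the PDE'' is not an argument, since \(\div(y_{n+1}^{1-s}\nabla U)=0\) gives no pointwise control of \(\partial_{y_{n+1}}U\) in terms of tangential derivatives at the same point. Indeed, already for the half-space the paper's computation shows \(\partial_{y_{n+1}}U_{\R^n_+}\sim y_{n+1}^{s-1}\) as \(y_{n+1}\to0\) at every boundary point off \(\partial E\) (this is the trace relation \(y_{n+1}^{1-s}\partial_{y_{n+1}}U\to c\,(-\Delta)^{s/2}\tilde\chi_E\neq0\)), so any admissible majorant must blow up like \(y_{n+1}^{s-1}\) on the whole hyperplane and cannot be deduced from a bound on the tangential part. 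Moreover, even for the tangential derivatives you need a majorant that is uniform in \(R\): the equi-boundedness of the total masses \(\operatorname{Per}(E_R;B_M)\) (which does follow from the perimeter density at the fixed point \(x\)) does not yield an \(R\)-independent pointwise bound on the Poisson convolution of \(\vert\nabla\chi_{E_R}\vert\) near \(\{y_{n+1}=0\}\); for that you would need a uniform upper Ahlfors bound \(\operatorname{Per}(E_R;B_r(z))\le Cr^{n-1}\) at all points \(z\), which fails for general sets of locally finite perimeter. The correct completion is the one you mention only in passing: from the uniform energy bound extract weak convergence of \(\nabla U_{E_R}\) in the weighted \(L^2\) space, so that lower semicontinuity (or simply Fatou together with your pointwise gradient convergence) gives \(\liminf_{R\to0^+}\Phi_{E_R,0}(1)\ge\Phi_{\R^n_+,0}(1)\) — which is in fact all that the paper's main theorem uses — and prove the matching \(\limsup\) by a cut-and-paste competitor agreeing with \(U_{E_R}\) near the boundary of a slightly larger half-ball, exploiting that the extension minimises the weighted Dirichlet energy among functions with its own trace. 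That comparison argument is precisely the content of the cited result of Caffarelli--Roquejoffre--Savin; it replaces, rather than supplies, the ``integrable dominant'' your sketch asks for, so as written your step (a) is not established.
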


This proposition follows from \thref{aT0zFAPm} and \thref{PFeFnfBd} below, and is a nonlocal analogue of the limit \begin{align*}
    \lim_{R\to 0^+}\frac{\operatorname{Per}(E;B_R(x)) }{R^{n-1}} = \operatorname{Per}(\R^n_+;B_1) 
\end{align*} where \(E\) is a set of locally finite perimeter and \(x\in \partial^\ast E\), see \cite[Equation (15.9) in Corollary 15.8]{maggi_sets_2012}. For \(s\)-minimisers, \thref{GeFXLo81} follows from \cite[Theorem 9.1]{caffarelli_nonlocal_2010}. The proof given there takes advantage of estimates obtained via comparing the \(s\)-perimeter of \(E\) with a competitor (when they apply Theorem 3.3 in the same paper), so does not readily extend to the \(s\)-stationary case.


\begin{lem} \thlabel{aT0zFAPm}
     Let \(E_k\subset \R^n\) be a sequence of Borel sets that converge to a Borel set \(E\subset \R^n\) in \(L^1_{\mathrm{loc}}(\R^n)\), and, for any ball \(B\subset \R^n\), \( \operatorname{Per}(E_k\setminus E;B)\) and \( \operatorname{Per}(E\setminus E_k;B)\) are bounded uniformly in \(k\). Then, \(\nabla U_{E_k}\to\nabla U_E\) in \(L^2_{\mathrm{loc}}(\R^{n+1};x_{n+1}^{1-s}\dd X)\).
\end{lem}

\begin{proof}
    We will show that \begin{align*}
         \lim_{k\to +\infty }\int_{\tilde B_R^+((x,0))} y_{n+1}^{1-s} \vert \nabla (U_{E_k}-U_E)(Y)\vert^2\dd Y=0.
     \end{align*} for all \(x\in \R^n\) and \(R>0\). Without loss of generality, we may assume \(x=0\) and \(R=1\). Fix \(\varepsilon>0\) sufficiently small and let \(v_k:=\tilde \chi_{E_k}-\tilde \chi_E\). From \cite[Proposition 7.1(i)]{caffarelli_nonlocal_2010} and \thref{7qaGfSn3}, it follows that \begin{align*}
         \int_{\tilde B_1^+} y_{n+1}^{1-s} \vert \nabla (U_{E_k}-U_E)(Y)\vert^2\dd Y &\leq  C\iint_{\mathcal Q (B_2)} \frac{\vert v_k(x) - v_k(y)\vert }{\vert x - y \vert^{n+s}} \dd y \dd x \\
         &\leq C  \big ( \| v_k\|_{L^1(B_{4\varepsilon^{-1/s}})}^s\vert \nabla v_k\vert^{1-s}(B_{4\varepsilon^{- 1/s  }})  + \varepsilon  \| v_k\|_{L^\infty(\R^n) } \bigg ).
     \end{align*} Since \(v_k = 2 \chi_{E_k\setminus E}-2\chi_{E\setminus E_k}\), we have \begin{align*}
         \| v_k\|_{L^1(B_{4\varepsilon^{-1/s}})} &\leq 2 \vert (E_k \triangle E) \cap B_{4\varepsilon^{- 1/s  }} \vert \\
         \vert  \nabla v_k\vert(B_{4\varepsilon^{- 1/s  }})&\leq 2 \operatorname{Per}(E_k\setminus E;B_{4\varepsilon^{- 1/s  }})+ 2\operatorname{Per}(E\setminus E_k;B_{4\varepsilon^{- 1/s  }}) \leq C_\varepsilon\\
          \| v_k\|_{L^\infty(\R^n) } \leq 4
     \end{align*} where \(C_\varepsilon>0\) is a constant that may depend on \(\varepsilon>0\). Hence, \begin{align*}
           \int_{\tilde B_1^+} x_{n+1}^{1-s} \vert \nabla (U_{E_k}-U_E)\vert^2\dd X &\leq  C_\varepsilon  \vert (E_k \triangle E) \cap B_{4\varepsilon^{- 1/s  }} \vert^s + C \varepsilon 
     \end{align*} where \(C>0\) does not depend on \(\varepsilon\). Thus, \begin{align*}
         \limsup_{k\to +\infty }\int_{\tilde B_1^+} x_{n+1}^{1-s} \vert \nabla (U_{E_k}-U_E)\vert^2\dd X &\leq  C \varepsilon .
     \end{align*} Since this holds for all \(\varepsilon>0\) sufficiently small, we obtain \begin{align*}
         \lim_{k\to +\infty }\int_{\tilde B_1^+} x_{n+1}^{1-s} \vert \nabla (U_{E_k}-U_E)\vert^2\dd X=0.
     \end{align*}
\end{proof}

In the following lemma, we compute the value of \(\Phi_{\R^n_+,0}(1) \) explicitly. 

\begin{lem} \thlabel{PFeFnfBd}
    We have that \begin{align*}
        \Phi_{\R^n_+,0}(1) =  \frac{2 \pi^{\frac n 2 - 1 } \Gamma \big ( \frac{s+1}2 \big )\Gamma \big ( \frac{1-s}2 \big ) }{\Gamma \big ( \frac s2 \big )\Gamma \big ( \frac{n-s}2+1 \big )}.
    \end{align*}
\end{lem}

\begin{proof}
First, we claim that \begin{align}
    U_{\R^n_+} (X) &=  -\frac{2\Gamma \big ( \frac{s+1}2\big )}{\pi^{\frac12}\Gamma \big ( \frac s2\big )} h \bigg ( \frac{x_n}{x_{n+1}}\bigg ) \label{D9M0UGx2}
\end{align} where \begin{align*}
   h(\tau )&= \int_0^{\tau } \frac {\dd t } { \big ( t^2 +1 \big )^{\frac {1+s} 2} } .
\end{align*} Indeed, if \(X_\ast := (x',-x_n,x_{n+1})\) with \(x'=(x_1,\dots, x_{n-1})\in \R^{n-1}\) then the change of variables \(y \to (y',-y_n)\) in the second integral below gives \begin{align}
    U_{\R^n_+}(X) &= -a(n,s)  x_{n+1}^s\bigg ( \int_{\mathbb R^n_+} \frac1{\big ( \vert x - y \vert^2 +x_{n+1}^2 \big )^{ \frac{n+s}2 }} \dd y-\int_{\mathbb R^n_-} \frac1{\big ( \vert x - y \vert^2 +x_{n+1}^2 \big )^{ \frac{n+s}2 }} \dd y \bigg ) \nonumber  \\
    &= -a(n,s) x_{ n+1}^s \big ( f(X)-f(X_\ast) \big ) \label{VK6vmGJS}
\end{align} where \begin{align*}
    f(X) := \int_{\mathbb R^n_+} \frac{\dd y}{\big ( \vert x - y \vert^2 +x_{n+1}^2 \big )^{ \frac{n+s}2 }}.
\end{align*} Observe that, via a translation in the first \(n-1\) coordinates, we have \begin{align*}
    f(X) &= \int_0^{+\infty}\int_{\mathbb R^{n-1}} \frac{\dd y'\dd t }{\big ( \vert y '\vert^2 +  ( x_n - t )^2 +x_{n+1}^2 \big )^{ \frac{n+s}2 }}.
\end{align*} Next, the rescaling \(y'\to y'\sqrt{( x_n - t )^2 +x_{n+1}^2}\) in the inner integral gives \begin{align*}
    f(X) &= \bigg ( \int_{\mathbb R^{n-1}} \frac{\dd y' }{\big ( \vert y '\vert^2 + 1 \big )^{ \frac{n+s}2 }}\bigg )   \int_0^{+\infty} \frac{\dd t }{\big ( ( x_n - t )^2 +x_{n+1}^2 \big )^{ \frac{1+s}2 }}
\end{align*} An application of polar coordinates gives \begin{align}
    \int_{\mathbb R^{n-1}} \frac{\dd y' }{\big ( \vert y '\vert^2 + 1 \big )^{ \frac{n+s}2 }} = \frac{\pi^{\frac{n-1}2 \Gamma \big ( \frac{s+1}2\big ) }}{\Gamma \big (  \frac{n+s}2\big ) }, \label{4HylpNJ5}
\end{align} see \cite[Proposition 4.1]{garofalo_fractional_2019} for the explicit computation. Furthermore, the change of variables \(t \to x_{n+1} t \) gives \begin{align}
     \int_0^{+\infty} \frac{\dd t }{\big ( ( x_n - t)^2 +x_{n+1}^2 \big )^{ \frac{1+s}2 }} &=  x_{n+1}\int_0^{+\infty} \frac{\dd t }{\big ( ( x_n - x_{n+1} t )^2 +x_{n+1}^2 \big )^{ \frac{1+s}2 }} \nonumber \\
     &=  x_{n+1}^{-s} \int_0^{+\infty} \frac{\dd t }{\big ( ( x_n/x_{n+1} -  t )^2 +1 \big )^{ \frac{1+s}2 }}.  \label{5yHmdraj}
\end{align} If we define \begin{align*}
   g(\tau) = \int_0^{+\infty} \frac {\dd t } { \big ( (\tau -t  )^2 +1 \big )^{\frac {1+s} 2} }-\int_0^{+\infty} \frac {\dd t } { \big ( (\tau +t  )^2 +1 \big )^{\frac {1+s} 2} }
\end{align*} and \begin{align*}
    \tilde a(n,s) = 2 a(n,s) \frac{\pi^{\frac{n-1}2 \Gamma \big ( \frac{s+1}2\big ) }}{\Gamma \big (  \frac{n+s}2\big ) } = \frac{2\Gamma \big ( \frac{s+1}2\big )}{\pi^{\frac12}\Gamma \big ( \frac s2\big )}
\end{align*} then it follows from~\eqref{VK6vmGJS},~\eqref{4HylpNJ5}, and~\eqref{5yHmdraj} that \begin{align*}
  U_{\R^n_+}(X) &=-\frac12 \tilde a(n,s) \bigg [\int_0^{+\infty} \frac{\dd t }{\big ( ( x_n/x_{n+1} -  t )^2 +1 \big )^{ \frac{1+s}2 }}-\int_0^{+\infty} \frac{\dd t }{\big ( ( x_n/x_{n+1} +  t )^2 +1 \big )^{ \frac{1+s}2 }} \bigg ] \\
  &= -\frac12\tilde a(n,s)  g \bigg ( \frac{x_n}{x_{n+1}} \bigg ) .
\end{align*} Now, \begin{align*}
     g(\tau) &= \int_{-\tau}^{+\infty} \frac {\dd t } { \big ( t^2 +1 \big )^{\frac {1+s} 2} }-\int_{\tau}^{+\infty} \frac {\dd t } { \big ( t^2 +1 \big )^{\frac {1+s} 2} } \\
     &=  \int_{-\tau}^{\tau } \frac {\dd t } { \big ( t^2 +1 \big )^{\frac {1+s} 2} } \\
     &= 2 h(\tau)
\end{align*} which proves~\eqref{D9M0UGx2}. 

From~\eqref{D9M0UGx2}, it follows that\begin{align*}
    \nabla U_{\R^n_+}(X) &= - \tilde a (n,s) \bigg (0,\dots, 0 , \frac 1{x_{n+1}}h '\bigg ( \frac{x_n}{x_{n+1}}\bigg ), - \frac{x_n}{x_{n+1}^2} h '\bigg ( \frac{x_n}{x_{n+1}}\bigg ) \bigg ) \\
    &=  - \tilde a (n,s) \bigg (0,\dots, 0 , \frac{x_{n+1}^s }{\big ( x_n^2 + x_{n+1}^2 \big )^{\frac {1+s}2 } }, - \frac{x_nx_{n+1}^{s-1} }{\big ( x_n^2 + x_{n+1}^2 \big )^{\frac {1+s}2 } }  \bigg ) , 
\end{align*} so \begin{align*}
    x_{n+1}^{1-s} \vert \nabla U_E\vert^2 &= \tilde a (n,s)^2 \frac{x_{n+1}^{s-1} }{\big ( x_n^2 + x_{n+1}^2 \big )^s}
\end{align*} Using the notation, \(B^{2,+}_1 := \{ (x_n,x_{n+1} ) \in \R^2 \text{ s.t. } x_n^2+x_{n+1}^2<1, x_{n+1}>0 \}\), we have \begin{align*}
    \tilde B_1^+ &= \big \{ X \in \R^{n+1} \text{ s.t. } (x_n,x_{n+1} ) \in B^{2,+}_1, x' \in B^{n-1}_{\sqrt{1-x_n^2-x_{n+1}^2}} \big \},
\end{align*} so we obtain \begin{align*}
    \int_{\tilde B_1^+} x_{n+1}^{1-s} \vert \nabla U_{\R^n_+} \vert^2 \dd X &= \tilde a (n,s)^2 \int_{B^{2,+}_1}\int_{ B^{n-1}_{\sqrt{1-x_n^2-x_{n+1}^2}}} \frac{x_{n+1}^{s-1} }{\big ( x_n^2 + x_{n+1}^2 \big )^s} \dd x'\dd x_n\dd x_{n+1} \\
    &= \tilde a (n,s)^2 \omega_{n-1} \int_{B^{2,+}_1} \frac{x_{n+1}^{s-1}\big (1- x_n^2 - x_{n+1}^2 \big )^{\frac{n-1}2} }{\big ( x_n^2 + x_{n+1}^2 \big )^s} \dd x_n\dd x_{n+1}.
\end{align*} Hence, polar coordinates \((x_n,x_{n+1})=(r\cos \theta, r\sin \theta)\) give \begin{align*}
      \int_{\tilde B_1^+} x_{n+1}^{1-s} \vert \nabla U_{\R^n_+} \vert^2 \dd X &=\tilde a (n,s)^2 \omega_{n-1} \bigg ( \int_0^1 r^{-s} (1-r^2)^{\frac{n-1} 2 } \dd r \bigg ) \bigg (\int_0^{\pi } (\sin \theta)^{s-1} \dd \theta \bigg ) .
\end{align*} One can show that \begin{align*}
\int_0^1  r^{-s} \big (1-r^2 \big )^{ \frac{n-1}2 }     \dd r =\frac{\Gamma \left(\frac{n+1}{2}\right) \Gamma \left(\frac{1-s}{2}\right)}{2 \Gamma \left(\frac {n-s} 2 + 1 \right)} \text{ and }  \int_0^\pi  (\sin \theta )^{s-1}  \dd \theta  = \frac{\pi^{\frac 12 }  \Gamma \left(\frac{s}{2}\right)}{\Gamma \left(\frac{s+1}{2}\right)}
\end{align*} Thus, \begin{align*}
    \int_{\tilde B_1^+} x_{n+1}^{1-s} \vert \nabla U_{\R^n_+} \vert^2 \dd X &= \tilde a (n,s)^2 \omega_{n-1} \bigg ( \frac{\Gamma \left(\frac{n+1}{2}\right) \Gamma \left(\frac{1-s}{2}\right)}{2 \Gamma \left(\frac {n-s} 2 + 1 \right)} \bigg ) \bigg ( \frac{\pi^{\frac 12 }  \Gamma \left(\frac{s}{2}\right)}{\Gamma \left(\frac{s+1}{2}\right)}\bigg ) \\
    &= \frac{2 \pi^{\frac n 2 - 1 } \Gamma \big ( \frac{s+1}2 \big )\Gamma \big ( \frac{1-s}2 \big ) }{\Gamma \big ( \frac s2 \big )\Gamma \big ( \frac{n-s}2+1 \big )}
\end{align*} as required. 
\end{proof}

We now give the proof of \thref{GeFXLo81}.

\begin{proof}[Proof of \thref{GeFXLo81}]
    By translating and rescaling, we have that \begin{align*}
        \Phi_{E,x}(R)= \Phi_{R^{-1}(E-x),0}(1).
    \end{align*} Let \(\{R_k\}\) be an arbitrary sequence of positive numbers such that \(\lim_{k \to +\infty} R_k=0\). Furthermore, let \(E_k:= R_k^{-1}(E-x)\), and, by rotating, assume without loss of generality that \(\nu_E(x)=-e_n\). By \cite[Theorem 15.5]{maggi_sets_2012}, \(E_k \to \R^{n}_+\) in \(L^1_{\mathrm{loc}}(\R^n)\) and \(\mathcal H^{n-1}\mres \partial^\ast E_k \stackrel{\ast}{\rightharpoonup}\mathcal H^{n-1}\mres  \partial \R^n_+\), so, in particular, \begin{align*}
       \operatorname{Per}(\R^n_+\setminus E_k ; B)+  \operatorname{Per}(E_k\setminus \R^n_+ ; B) \leq   2\operatorname{Per}(E_k ; B) +  2\operatorname{Per}(\R^n_+ ; B) \leq C \qquad \text{for all balls } B\subset \R^n
    \end{align*} where \(C>0\) is independent of \(k\). Hence, \(E_k\) and \(E=\R^n_+\) satisfy the assumptions of \thref{aT0zFAPm}, so we have \begin{align*}
         \lim_{k\to+\infty }\Phi_{E,x}(R_k)= \lim_{k\to+\infty }\Phi_{E_k,0}(1) = \Phi_{\R^n_+,0}(1)
    \end{align*} which implies~\eqref{bpwPOxAA}. Moreover, from~\thref{PFeFnfBd} and the properties of the Gamma function, \begin{align*}
        \Phi_{\R^n_+,0}(1) &=\bigg ( \frac{2 \pi^{\frac n 2 - 1 } \Gamma \big ( \frac{s+1}2 \big )\Gamma \big ( \frac{1-s}2 +1\big ) }{\Gamma \big ( \frac s2+1 \big )\Gamma \big ( \frac{n-s}2+1 \big )} \bigg ) \frac s {1-s} \geq  \frac {C(n)s} {1-s} 
    \end{align*} using that the Gamma function is continuous when restricted to the positive reals. 
\end{proof}

\section{Proof of main theorem}

In this section, we give the proof of \thref{jG1dQXin}.

\begin{proof}[Proof of \thref{jG1dQXin}] 
   By translating, we may assume, without loss of generality, that \(x=0\in \partial^\ast E\). Let \(s>s_0>0\) and let \(U_E\) be given by~\eqref{kzYTNNBU}. From \thref{uUBAsC54} and \thref{GeFXLo81}, we have that \begin{align*}
        \int_{\tilde B_R^+ } x_{n+1}^{1-s} \vert \nabla U_E \vert^2 \dd X \geq  R^{n-s}  \lim_{r\to 0^+} \Phi_{E,0}(r) \geq \frac{CR^{n-s}}{1-s}
    \end{align*} for all \(R>0\) with \(C>0\) depending only on \(n\) and \(s_0\). Consequently, by~\thref{Qu1H6WRf}, we have \begin{align*}
        \frac{CR^{n-s}}{1-s} \leq \operatorname{Per}_s(E;B_{2R}) \qquad \text{for all }R>0.
    \end{align*} Next, from \thref{60sDucQ0} and the relative isoperimetric inequality 
    \begin{align*}
        \operatorname{Per}_s(E; B_R) \leq  \frac {C(n)} s  \bigg (\frac{1}{1-s} \big(\operatorname{Per}(E;B_{2\varepsilon^{- 1/s  }R}) \big)^{\frac{n-s}{n-1}}  + \varepsilon  R^{n-s} \bigg ) 
    \end{align*} for all \(\varepsilon\in (0,3^{-s})\), so  \begin{align*}
         \frac{R^{n-s}}{1-s} \leq C  \bigg (\frac{1}{1-s} \big(\operatorname{Per}(E;B_{4\varepsilon^{- 1/s  }R}) \big)^{\frac{n-s}{n-1}}  + \varepsilon  R^{n-s} \bigg ) .
    \end{align*} Choosing \(\varepsilon = \min \{3^{-s},2^{-1}C^{-1}(1-s)^{-1} \}\), we obtain \begin{align*}
        R^{n-1} \leq C \operatorname{Per}(E;B_{4\varepsilon^{- 1/s  }R}) \qquad \text{for all }R>0
    \end{align*} Replacing \(R\) with \(4^{-1}\varepsilon^{1/s}R\) and using that \(\varepsilon^{1/s}\leq C(n,s_0)\), we conclude \begin{align*}
        R^{n-1} \leq C \operatorname{Per}(E;B_R) \qquad \text{for all }R>0
    \end{align*} with \(C\) robust as \(s\to 1^-\).
    
\end{proof}

\section{Declarations}

The authors have no competing interests to declare that are relevant to the content of this article. This manuscript has no associated data and conforms to relevant ethical standards. The author is supported by an Australian Government Research Training Program Scholarship.

\vfill
\printbibliography

\end{document}